\newtheorem{prop}{Proposition}
\newtheorem{theorem}{Theorem}
\newtheorem{corollary}{Corollary}
\newtheorem{lm}{Lemma}
\theoremstyle{remark}
\newtheorem{example}{\bf Example}
\newtheorem{rmk}{\bf Remark}
\def\NN{\mathds{N}}
\def\kk{\mathds{k}}
\def\QQ{\mathbb{Q}}
 \newcommand{\genlegendre}[4]{%
 	\genfrac{(}{)}{}{#1}{#3}{#4}%
 	\if\relax\detokenize{#2}\relax\else_{\!#2}\fi
 }
\begin{document}

\selectlanguage{english}


 \selectlanguage{english}
 \title[ On some imaginary triquadratic number...]{On some imaginary triquadratic number fields $k$ with
 $\mathrm{Cl}_2(k)\simeq(2, 4)$ or $(2, 2, 2)$}
 \author[A. Azizi]{Abdelmalek Azizi}
 \address{Abdelmalek Azizi: Mohammed First University, Mathematics Department, Sciences Faculty, Oujda, Morocco }
 \email{abdelmalekazizi@yahoo.fr}
 \author[M. M. Chems-Eddin]{Mohamed Mahmoud CHEMS-EDDIN}
 \address{Mohamed Mahmoud CHEMS-EDDIN: Mohammed First University, Mathematics Department, Sciences Faculty, Oujda, Morocco }
 \email{2m.chemseddin@gmail.com}

 \author[A. Zekhnini]{Abdelkader Zekhnini}
 \address{Abdelkader Zekhnini: Mohammed First University, Mathematics Department, Pluridisciplinary faculty, Nador, Morocco}
 \email{zekha1@yahoo.fr}


\subjclass[2000]{11R11; 11R16; 11R18; 11R27; 11R29.}
\keywords{$2$-group rank; $2$-class group; imaginary triquadratic number fields.}

\maketitle

 \begin{abstract}
 	Let $d$ be a square free integer and $L_d:=\mathbb{Q}(\zeta_{8},\sqrt{d})$. In the present work we determine all the fields $L_d$ such that  the $2$-class group, $\mathrm{Cl}_2(L_d)$, of $L_d$ is of type $(2,4)$ or $(2,2,2)$.
 \end{abstract}

 {\section{Introduction}}
 Let $k$ be a number field and $\mathrm{Cl}_2(k)$ its $2$-class group, that is the $2$-Sylow subgroup  of its ideal class group $\mathrm{Cl}(k)$.
 The problem of  determining the structure of $\mathrm{Cl}_2(k)$  is one of the most interesting problems of algebraic number theory,  accordingly many mathematicians  treated  this problem for some number fields of degree $2$, $4$. For example, in \cite{kaplan76} using  binary quadratic forms theory, Kaplan, determined the $2$-class group of some quadratic number fields.
 The authors of \cite{BFL1,BFL2} used genus theory and class field theory to characterize those imaginary quadratic number fields, $k$, with $2$-class group of type $(2,2^m)$ or $(2,2,2)$ and the $2$-rank of the class group of its Hilbert $2$-class field equal to $2$.
  In  \cite{azmohib}, using units and the $2$-part of the class number of  subextensions of $k$, the authors determined the $2$-class group of some real biquadratic number fields $k=\mathbb{Q}(\sqrt{m}, \sqrt{d})$ with $d$ is an odd square free integer.  Using similar techniques, the paper  \cite{azizitaous(24)(222)} characterizes all the fields  $k=\mathbb{Q}(i, \sqrt{d})$ such that $\mathrm{Cl}_2(k)$ is of type $(2,4)$  or $(2,2,2)$
 (here $(a_1,...,a_r)$ denotes the direct sum of cyclic groups of order $a_i$, for $i=1,...,r$). Whenever $k$ is an imaginary multiquadratic number field, this problem is  strongly related to the units of $k$ and the class number of the $2$-part of the class numbers of   its sub-extensions  as we will see later.
 This paper is, actually,  a continuation and extension of our earlier work \cite{ACZ}, in which we determined the rank of the $2$-class group of  all fields of the form
 $L_d:=\mathbb{Q}(\zeta_{8},\sqrt{d})$ with $d$ a positive square free integer and moreover we determined all  fields $L_d$ for which the $2$-class group, $\mathrm{Cl}_2(L_d)$,  is  of type $(2,2)$. In this work, we are interested in determining all  positive square free integers $d$ satisfying  $\mathrm{Cl}_2(L_d)$ is of type $(2,4)$ or  $(2,2,2)$.

 \section*{Notations}
  The next notations will be used for the rest of this article:
 { \footnotesize \begin{enumerate}[\rm$\bullet$]
 		\item $d$:  A positive  odd square free integer,
 		\item$L_{d}$:  $\mathbb{Q}(\zeta_{8},\sqrt{d})$,
 		\item $\mathrm{Cl}_2(L)$:   The $2$-class group of some number field  $L$,
 		\item $h(L)$:   The class number of the number field $L$,
 		\item $h_2(L)$:   The $2$-class number of the number field $L$,
 		\item $ h_2(m)$:   The $2$-class number of the quadratic  field $\mathbb{Q}(\sqrt m)$,
 		\item $E_{L}$:  The unit group of any number field  $L$,
 		\item $\varepsilon_m$:   The fundamental unit of $\mathbb{Q}(\sqrt{m})$,
 		\item $W_{L}$:   The set of roots of unity contained in a field $L$,
 		\item $\omega_{L}$:   The cardinality of $W_{L}$,
 		\item $L^{+}$:   The maximal real subfield of an imaginary number field $L$,
 		\item $Q_{L}$:   The  Hasse's index, that is  $[E_{L}:W_{L}E_{L^{+}}]$, if $L/L^+$ is CM,
 		\item $Q(L/k)$:   The unit index of a $V_4$-extension  $L/k$,
 		\item $q(L):=[E_{L}: \prod_{i}E_{k_i}]$, with  $k_i$ are  the  quadratic subfields	of $L$.	
 \end{enumerate}}

 \section{Preliminaries}
 Let us first collect some results   and definitions that we will    need in the sequel. Recall that a field $K$ is said to be a CM-field if it is  a totally complex quadratic extension of a totally real number field.
Note also that a $V_4$-extension $K/k$ (i.e., a normal extension of number fields with $\mathrm{Gal}(K/k)=V_4$, where $V_4$ is  the Klein's four-group) is called $V_4$-extension of CM-fields if exactly two of its three quadratic sub-extensions   are CM-fields.
 Let us  next recall the   class number formula for a $V_4$-extension of CM-fields:
 \begin{prop}[\cite{lemmermeyer1995ideal}]\label{ class number f.}
 	Let $L/K$ be a $V_4$-extension of $\mathrm{CM}$-fields, then
 	$$h(L)=\frac{Q_{L}}{Q_{K_1}Q_{K_2}}\cdot\frac{\omega_{L}}{\omega_{K_1}\omega_{K_2} }\cdot\frac{h(K_1)h(K_2)h(L^+)}{h(K)^2}\cdot$$
 	Where $K_1, K_2, L^+$ are the three sub-extensions of $L/K$, with $K_1$ and $K_2$ are  $CM$-fields.
 \end{prop}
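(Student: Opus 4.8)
The plan is to argue analytically, comparing residues at $s=1$ of the relevant Dedekind zeta functions via Artin's factorisation formalism and then reading off the arithmetic from the analytic class number formula. Write $\mathrm{Gal}(L/K)=\{1,\sigma_1,\sigma_2,\sigma_3\}$ and let $\chi_1,\chi_2,\chi_3$ be its three nontrivial (quadratic) characters, indexed so that the fixed field of $\ker\chi_i$ is $K_i$ for $i=1,2$ and is $L^+$ for $i=3$. Then $\zeta_L(s)=\zeta_K(s)\,L(s,\chi_1)L(s,\chi_2)L(s,\chi_3)$ and $\zeta_{K_i}(s)=\zeta_K(s)L(s,\chi_i)$ (likewise $\zeta_{L^+}(s)=\zeta_K(s)L(s,\chi_3)$), so that
$$\zeta_L(s)\,\zeta_K(s)^2=\zeta_{K_1}(s)\,\zeta_{K_2}(s)\,\zeta_{L^+}(s).$$
I would multiply by $(s-1)$, let $s\to1$, and insert $\operatorname{Res}_{s=1}\zeta_F(s)=\dfrac{2^{r_1(F)}(2\pi)^{r_2(F)}\,h(F)\,R_F}{\omega_F\sqrt{|d_F|}}$ for each of the six fields, turning the identity into a relation among class numbers, regulators, roots of unity and discriminants.

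Next I would dispose of the ``trivial'' factors. By hypothesis (and the notation $L^+$), $L$ is a CM-field of degree $4m$ over $\mathbb{Q}$ with $m=[K:\mathbb{Q}]$, the fields $K_1,K_2$ are CM of degree $2m$, and $K$ and $L^+$ are totally real of degrees $m$ and $2m$; in particular $K=K_1^+=K_2^+$. A count of real and complex places shows the archimedean factors $2^{r_1}(2\pi)^{r_2}$ on the two sides both equal $(2\pi)^{2m}2^{2m}$. For the discriminants, the relative conductor--discriminant formula gives $|d_{K_i}|=|d_K|^2\,N\mathfrak{f}(\chi_i)$, $|d_{L^+}|=|d_K|^2\,N\mathfrak{f}(\chi_3)$ and $|d_L|=|d_K|^4\,N\mathfrak{f}(\chi_1)N\mathfrak{f}(\chi_2)N\mathfrak{f}(\chi_3)$, whence $|d_{K_1}||d_{K_2}||d_{L^+}|=|d_K|^2|d_L|$, so these factors cancel as well. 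Using $\omega_K=\omega_{L^+}=2$ (both fields being totally real), what survives is
$$h(L)=\frac{h(K_1)h(K_2)h(L^+)}{h(K)^2}\cdot\frac{2\,\omega_L}{\omega_{K_1}\omega_{K_2}}\cdot\frac{R_{K_1}R_{K_2}R_{L^+}}{R_L R_K^2}.$$

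The heart of the argument is then to show $\dfrac{R_{K_1}R_{K_2}R_{L^+}}{R_L R_K^2}=\dfrac{1}{2}\cdot\dfrac{Q_L}{Q_{K_1}Q_{K_2}}$. For this I would first record the standard comparison of regulators in a CM-extension: if $F$ is a CM-field of degree $2t$ with maximal real subfield $F^+$, then $E_F$ and $E_{F^+}$ share the same unit rank $t-1$, and computing the covolume of the logarithmic lattice of $E_{F^+}$ inside that of $E_F$ --- using that $\log\|u\|_w=2\log|u|_v$ at a complex place $w\mid v$ --- yields $R_F=\dfrac{2^{t-1}}{Q_F}\,R_{F^+}$ with $Q_F=[E_F:W_FE_{F^+}]\in\{1,2\}$. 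Applying this to $K_i/K$ (so $t=m$) gives $R_{K_i}=\dfrac{2^{m-1}}{Q_{K_i}}R_K$, and to $L/L^+$ (so $t=2m$) gives $R_L=\dfrac{2^{2m-1}}{Q_L}R_{L^+}$. Substituting, the $R_K$ and $R_{L^+}$ factors cancel and the powers of $2$ collapse to $2^{2m-2}/2^{2m-1}=1/2$, leaving precisely $\tfrac12\,Q_L/(Q_{K_1}Q_{K_2})$; together with the previous display this is the claimed formula.

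I expect the last step to be the main obstacle: pinning down the exact power of $2$ in $R_F=2^{t-1}R_{F^+}/Q_F$ (both the normalisation $\log\|u\|_w=2\log|u|_v$ at complex places and the $(t-1)\times(t-1)$ minor defining the regulator are places where a stray factor of $2$ can slip in), together with the careful place-counting and the verification that $K=K_i^+$ so that $Q_{K_i}$ and $Q_L$ are indeed the Hasse indices appearing in the formula. A purely algebraic alternative is to deduce the statement from Kuroda's class number formula for $V_4$-extensions specialised to the CM case, but that involves essentially the same unit-index bookkeeping, so I would stay with the analytic proof.
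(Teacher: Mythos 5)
Your proof is correct. Note first that the paper offers no proof of this proposition at all: it is a preliminary imported verbatim from Lemmermeyer's \emph{Ideal class groups of cyclotomic number fields I}, and your analytic derivation is essentially the argument given there --- the Brauer-type relation $\zeta_L\zeta_K^2=\zeta_{K_1}\zeta_{K_2}\zeta_{L^+}$, cancellation of the archimedean and discriminant factors via the conductor--discriminant formula, and the CM regulator comparison $R_F=2^{t-1}R_{F^+}/Q_F$ applied to $L/L^+$ and to $K_i/K$ (using that $K=K_1^+=K_2^+$ and $\omega_K=\omega_{L^+}=2$). All the exponent bookkeeping checks out: $2^{2m-2}/2^{2m-1}$ does give the $\tfrac12$ that combines with $\omega_K^2/\omega_{L^+}=2$ to leave exactly $Q_L/(Q_{K_1}Q_{K_2})\cdot\omega_L/(\omega_{K_1}\omega_{K_2})$. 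The only slip is cosmetic: both sides of the zeta identity have a pole of order $3$ at $s=1$, so you must multiply by $(s-1)^3$, not $(s-1)$, before letting $s\to1$; the resulting equality of products of residues is the one you use, so nothing downstream is affected.
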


The following class number formula for a multiquadratic number field is usually attributed to Kuroda \cite{Ku-50}, but it goes back to Herglotz \cite{He-22}  (cf. \cite[p. 27]{BFL}).
  \begin{prop}\label{wada's f.}
 	Let $K$ be a multiquadratic number field of degree $2^n$, $n\in\mathds{N}$,  and $k_i$ the $s=2^n-1$ quadratic subfields of $K$. Then
 $$h(K)=\frac{1}{2^v}[E_K: \prod_{i=1}^{s}E_{k_i}]\prod_{i=1}^{s}h(k_i),$$
 with $$v=\left\{ \begin{array}{cl}
 	n(2^{n-1}-1); &\text{ if } K \text{ is real, }\\
 	(n-1)(2^{n-2}-1)+2^{n-1}-1 & \text{ if } K \text{ is imaginary.}
 	\end{array}\right.$$
 \end{prop}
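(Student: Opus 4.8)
The plan is to deduce the formula from the analytic class number formula applied to $K$ and to each of its quadratic subfields, via the factorisation of the Dedekind zeta function of the abelian field $K$. Put $G=\mathrm{Gal}(K/\mathbb{Q})\cong(\mathbb{Z}/2\mathbb{Z})^{n}$; each nontrivial character $\chi$ of $G$ is quadratic, its fixed field is one of the $s=2^{n}-1$ quadratic subfields $k_i$, and $\zeta_{k_i}(s)=\zeta(s)L(s,\chi)$, so that
$$\zeta_K(s)=\zeta(s)\prod_{\chi\neq 1}L(s,\chi)=\zeta(s)^{\,2-2^{n}}\prod_{i=1}^{s}\zeta_{k_i}(s).$$
Comparing leading Laurent coefficients at $s=1$ (both sides have a simple pole there) gives $\rho_K=\prod_{i=1}^{s}\rho_{k_i}$ for the residues; substituting $\rho_F=\dfrac{2^{r_1(F)}(2\pi)^{r_2(F)}h(F)R_F}{\omega_F\sqrt{|d_F|}}$ (with $r_1,r_2$ the numbers of real and complex places, $R_F$ the regulator and $d_F$ the discriminant) together with the conductor--discriminant identity $|d_K|=\prod_i|d_{k_i}|$ makes the square roots and the $\pi$'s cancel, and collecting the surviving powers of $2$ leaves
$$h(K)R_K=\prod_ih(k_i)\prod_iR_{k_i}\ \ (K\text{ real}),\qquad \frac{h(K)R_K}{\omega_K}=2^{\,2^{n-1}-1}\,\frac{\prod_ih(k_i)\prod_{k_i\subseteq K^{+}}R_{k_i}}{\prod_{k_i\not\subseteq K^{+}}\omega_{k_i}}\ \ (K\text{ imaginary}).$$

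In the real case I would convert the regulator quotient into the unit index $q(K)=[E_K:\prod_iE_{k_i}]$. Choosing coset representatives $g_1=\mathrm{id},\dots,g_{2^{n}}$ of $G$ and letting $\chi_i$ be the character of $k_i$, one has $\log|g_j(\varepsilon_{k_i})|=\chi_i(g_j)\log\varepsilon_{k_i}$ (with $\varepsilon_{k_i}>1$), so the regulator of $\langle\varepsilon_{k_1},\dots,\varepsilon_{k_s}\rangle$ equals $\bigl(\prod_iR_{k_i}\bigr)\cdot\bigl|\det(\chi_i(g_j))_{1\le i\le s,\ 2\le j\le 2^{n}}\bigr|$. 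The last determinant is obtained from the Hadamard character table $H$ of $G$ by deleting the all-ones (trivial-character) row and the all-ones (identity) column, so up to sign it is the $(1,1)$-entry of $\operatorname{adj}(H)$; from $HH^{\mathsf T}=2^{n}I$ we get $\operatorname{adj}(H)=2^{-n}(\det H)H^{\mathsf T}$, whence that entry is $2^{-n}\det H$ and its absolute value is $2^{-n}|\det H|=2^{-n}\cdot 2^{\,n2^{n-1}}=2^{\,n(2^{n-1}-1)}=2^{v}$. On the other hand this same regulator equals $[E_K:W_K\langle\varepsilon_{k_i}\rangle]R_K=q(K)R_K$, since $W_K=\{\pm1\}$ and $\prod_iE_{k_i}=W_K\langle\varepsilon_{k_i}\rangle$ (the nonvanishing of the minor shows the $\varepsilon_{k_i}$ are multiplicatively independent, so $q(K)<\infty$). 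Combining with the real identity above gives $h(K)=2^{-v}q(K)\prod_ih(k_i)$.

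For the imaginary case, $K^{+}$ is multiquadratic of degree $2^{n-1}$, its quadratic subfields are exactly the $2^{n-1}-1$ real $k_i$, and the remaining $2^{n-1}$ subfields are imaginary quadratic; here I would: (i) apply the real case to $K^{+}$, getting $\prod_{k_i\subseteq K^{+}}R_{k_i}=2^{-v^{+}}q(K^{+})R_{K^{+}}$ with $v^{+}=(n-1)(2^{n-2}-1)$; (ii) compare absolute values of units of $K^{+}$ at the complex places of $K$ (each contributing $2\log|\cdot|$) with those at the real places of $K^{+}$ to obtain $R_{K^{+}}=Q_KR_K/2^{\,2^{n-1}-1}$; (iii) with $W'=\langle W_{k_i}:k_i\text{ imaginary}\rangle\subseteq W_K$, note $\prod_iE_{k_i}=W'\langle\varepsilon_{k_i}:k_i\subseteq K^{+}\rangle$ and deduce $q(K)=Q_K\,[W_K:W']\,q(K^{+})$ by an elementary index computation; (iv) inspect the possible root-of-unity groups of the imaginary quadratic subfields (only $\{\pm1\},\langle i\rangle,\langle\zeta_6\rangle$ occur) to get $\prod_{k_i\not\subseteq K^{+}}\omega_{k_i}=2^{\,2^{n-1}-1}|W'|$. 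Substituting (i)--(iv) into the imaginary identity from the first step and collecting powers of $2$ (using $[W_K:W']=\omega_K/|W'|$) produces $h(K)=2^{-v}q(K)\prod_ih(k_i)$ with $v=v^{+}+2^{n-1}-1=(n-1)(2^{n-2}-1)+2^{n-1}-1$.

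The zeta bookkeeping and the lattice-index arithmetic are routine; the one point requiring real care will be the imaginary case, where the Hasse index $Q_K$, the real-versus-complex normalisation of the regulators, and the root-of-unity contributions all have to be kept in line so that the net power of $2$ is exactly $2^{v}$. As an alternative one could instead induct on $n$ using Kuroda's class number formula for $V_4$-extensions — of which the CM case recalled above is one instance — applied to the $V_4$-extension $K/\mathbb{Q}(\sqrt{a_1},\dots,\sqrt{a_{n-2}})$ when $K=\mathbb{Q}(\sqrt{a_1},\dots,\sqrt{a_n})$, at the cost of invoking the various non-CM versions of that formula.
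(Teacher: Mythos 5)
Your proposal is correct. The paper does not prove this proposition at all --- it is quoted as a known result of Kuroda/Herglotz (via Lemmermeyer and Benjamin--Lemmermeyer--Snyder) --- and your argument via the factorisation $\zeta_K=\zeta^{2-2^n}\prod_i\zeta_{k_i}$, the analytic class number formula, the conductor--discriminant formula, and the Hadamard-minor computation $2^{-n}|\det H|=2^{n(2^{n-1}-1)}$ for the regulator index is precisely the classical proof found in those references. I checked the delicate points (the identity $q(K)R_K=2^{v}\prod_iR_{k_i}$ in the real case, the CM regulator relation $R_{K^+}=Q_KR_K/2^{2^{n-1}-1}$, the index identity $q(K)=Q_K[W_K:W']q(K^+)$, and the root-of-unity count $\prod_{k_i\not\subseteq K^+}\omega_{k_i}=2^{2^{n-1}-1}|W'|$ in all four cases for $W'$) and the powers of $2$ do combine to give exactly $v=(n-1)(2^{n-2}-1)+2^{n-1}-1$ in the imaginary case, so the sketch is sound as written.
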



Continue with  the next formula called Kuroda's class number formula for a $V_4$-extension $K/k$.
 \begin{prop}[\cite{lemmermeyer1994kuroda},  p. 247]\label{Kurod's f.}
 	Let $K/k$ be a  $V_4$-extension. Then we have :
 	$$h(K)=\left\{ \begin{array}{ll}
 	 \vspace{0.2cm}\frac{1}{4} \cdot Q(K/k)\cdot\prod_{i=1}^{3}h(k_i) &\text{ if } k=\mathbb Q \text{ and } K \text{ is real, }\\
	 \vspace{0.2cm}\frac{1}{2} \cdot Q(K/k)\cdot \prod_{i=1}^{3}h(k_i) & \text{ if } k=\mathbb Q \text{ and } K \text{ is imaginary, }\\
 	\frac{1}{4} \cdot Q(K/k)\cdot \prod_{i=1}^{3}h(k_i)/h(k)^2 & \text{ if } k \text{ is an imaginary  quadratic extension of  } \mathbb {Q}.
 	
 	\end{array}\right.$$
 Where $k_i$ are the $3$ subextensions of $K/k$.	
 \end{prop}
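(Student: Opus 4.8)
The plan is to derive the formula from the analytic class number formula together with Artin's factorisation of Dedekind zeta functions --- equivalently, with the Brauer relation attached to the group $V_4$. (A purely algebraic derivation, via the ambiguous class number formula and Herbrand quotients of unit groups, is also available, and is the route taken in the cited reference \cite{lemmermeyer1994kuroda}; the analytic argument is quicker to sketch.) Since $\mathrm{Gal}(K/k)\simeq V_4$ and the regular representation of $V_4$ is the sum of its four one-dimensional characters --- the trivial one, of conductor $(1)$, and three nontrivial ones, each inflated from the nontrivial character of exactly one of the quotients $\mathrm{Gal}(k_i/k)$ --- Artin's formalism gives
$$\zeta_K(s)\,\zeta_k(s)^2=\zeta_{k_1}(s)\,\zeta_{k_2}(s)\,\zeta_{k_3}(s).$$
Applying $\lim_{s\to1}(s-1)^3$ to both sides and inserting the class number formula $\lim_{s\to1}(s-1)\zeta_F(s)=2^{r_1(F)}(2\pi)^{r_2(F)}h(F)R_F/\bigl(\omega_F\sqrt{|d_F|}\bigr)$ turns this into an identity among the five quantities $h(F)R_F/\omega_F$, once the discriminant and archimedean factors are disposed of. The discriminants cancel: the conductor--discriminant formula gives $\mathfrak d_{K/k}=\mathfrak d_{k_1/k}\,\mathfrak d_{k_2/k}\,\mathfrak d_{k_3/k}$, whence $|d_K|\,|d_k|^2=|d_{k_1}d_{k_2}d_{k_3}|$ by the tower formula. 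The powers of $\pi$ cancel because $r_2(K)+2r_2(k)=r_2(k_1)+r_2(k_2)+r_2(k_3)$, and a short inspection of signatures shows that in each of the three cases of the statement (real biquadratic over $\QQ$; imaginary biquadratic over $\QQ$; totally imaginary of degree $8$ over an imaginary quadratic field) one moreover has $r_1(K)+2r_1(k)=r_1(k_1)+r_1(k_2)+r_1(k_3)$, so \emph{no} power of $2$ is left over from the Gamma-factors; the rational constants $\tfrac14$ and $\tfrac12$ must therefore be produced entirely by the regulator and root-of-unity bookkeeping of the next step.

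\medskip

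The heart of the matter is then to express $R_K$ through $R_{k_1},R_{k_2},R_{k_3}$ and $R_k$. Put $N:=E_{k_1}E_{k_2}E_{k_3}\subseteq E_K$ and $Q(K/k):=[E_K:N]$. The image $\overline N:=NW_K/W_K$ is a finite-index sublattice of the unit lattice $E_K/W_K$, so $R(\overline N)=[E_K/W_K:\overline N]\cdot R_K$, and $[E_K/W_K:\overline N]$ is $Q(K/k)$ up to a factor recording how $W_K$ meets $N$ (this, with $\omega_K,\omega_k,\omega_{k_i}$, supplies the remaining powers of $2$). One then computes $R(\overline N)$ directly: choose a $\ZZ$-basis of $E_K/W_K$ adapted to the three sublattices $E_{k_i}/W_{k_i}$, observe that $E_k/W_k$ is their common part, and evaluate the regulator determinant block by block, keeping track of how the archimedean absolute values of $K$ restrict to those of each $k_i$. \emph{This lattice computation is the main obstacle}: it depends genuinely on the unit ranks, hence on whether $K$, $k$ and the $k_i$ are real or complex, and it is exactly what yields the fixed powers $2^{2}$, $2^{1}$, $2^{2}$ --- i.e.\ the constants $\tfrac14,\tfrac12,\tfrac14$ --- in the three cases, together with the factor $h(k)^{-2}$ in the third (coming from the $h(k)^{2}$ in the denominator of the residue identity).

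\medskip

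Finally one assembles the pieces. Feeding the resulting identity $R_K=c\cdot\prod_{i=1}^3 R_{k_i}\,/\,\bigl(R_k^2\,Q(K/k)\bigr)$ (with $c$ an explicit power of $2$) back into the residue identity, the regulators and roots of unity collapse into $Q(K/k)$ times the advertised constant, the $h(k_i)$ combine into $\prod_{i=1}^3 h(k_i)$, and the two copies of $h(k)$ survive as $h(k)^{-2}$ when $k$ is imaginary quadratic and disappear when $k=\QQ$ (since then $h(k)=1$). Selecting the case according to the signature data found in the first step then gives precisely the three displayed formulas. As an internal check one may specialise to $k=\QQ$, $K=\QQ(\sqrt m,\sqrt n)$ and confirm agreement with the degree-$4$ instance of Proposition \ref{wada's f.}.
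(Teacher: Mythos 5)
The paper offers no proof of this proposition: it is quoted verbatim from Lemmermeyer's \emph{Kuroda's class number formula}, so there is no internal argument to compare yours against. Judged on its own terms, your analytic skeleton is sound and is essentially Kuroda's original route (Lemmermeyer's cited proof is the purely algebraic one you mention in passing). The formal reductions all check out: the Brauer relation $\zeta_K\zeta_k^2=\zeta_{k_1}\zeta_{k_2}\zeta_{k_3}$ is correct for $V_4$; the discriminant cancellation $|d_K||d_k|^2=|d_{k_1}d_{k_2}d_{k_3}|$ follows from conductor--discriminant plus the tower formula as you say; and the signature identities $r_1(K)+2r_1(k)=\sum r_1(k_i)$ and $r_2(K)+2r_2(k)=\sum r_2(k_i)$ do hold in each of the three cases, so the archimedean factors indeed contribute nothing.

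The genuine gap is that the one step carrying all the content of the proposition --- the regulator/unit-index computation --- is only described, never performed. You correctly reduce to an identity among the $h(F)R_F/\omega_F$ and correctly identify that $R_K$ must be compared with $R_{k_1}R_{k_2}R_{k_3}$ through the sublattice $N=E_{k_1}E_{k_2}E_{k_3}$, but you then simply assert that the block-by-block determinant evaluation ``yields the fixed powers $2^2$, $2^1$, $2^2$.'' Those exponents are precisely what distinguishes the three cases of the statement, and nothing in your write-up derives them: you would need to exhibit, in each case, an explicit basis of $E_K/W_K$ adapted to the three unit sublattices, track how the archimedean valuations of $K$ restrict to each $k_i$ (which is where the case distinction real/imaginary actually enters), and separately account for the index $[W_K:W_K\cap N]$ against the quotient $\omega_{k_1}\omega_{k_2}\omega_{k_3}/(\omega_K\omega_k^2)$. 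Until that bookkeeping is done, the argument establishes only that $h(K)=c\cdot Q(K/k)\prod_i h(k_i)/h(k)^2$ for \emph{some} power of $2$ depending on the case, not the specific constants $\tfrac14$, $\tfrac12$, $\tfrac14$. Your concluding consistency check against the degree-$4$ case of Proposition \ref{wada's f.} is a sanity test, not a substitute for the computation.
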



 \section{Fields $L_d$ for which $\mathrm{Cl}_2(L_d)$ is of type $(2,4)$}

 Our goal in this section is to determine all  fields $L_d$ for which $\mathrm{Cl}_2(L_d)\simeq(2,4)$. Recall first the definition of the rational biquadratic residue symbol:

  For a prime $p\equiv 1\pmod 4$ and a quadratic residue $a\pmod p$, $\genfrac(){}{0}{a}{p}_4$     will denote the rational biquadratic residue symbol defined by   $$\genfrac(){}{0}{a}{p}_4=\pm1\equiv a^{\frac{p-1}{4}}\pmod p.$$
  Moreover,  for an integer $a\equiv 1\pmod 8$, the symbol $\genfrac(){}{0}{a}{2}_4$ is defined by
  $$\genfrac(){}{0}{a}{2}_4=1 \text{ if }  a\equiv 1\pmod{16},\;\;\;\;\;\;\;\;\; \genfrac(){}{0}{a}{2}_4=-1 \text{ if }  a\equiv 9\pmod{16}.$$

  It turns out that   $\genfrac(){}{0}{a}{2}_4=(-1)^{\frac{a-1}{8}}$.

 \vspace{0.2cm}In all this section, let $p$, $p_i$, $q$ and $q_i$ be prime   integers such that $p\equiv p_i\equiv1\pmod4$ and $q\equiv q_i\equiv3\pmod4$ with $i\in\mathds{N}^*$. The  following lemma is proved in our earlier paper \cite[Theorem 5.6]{ACZ}.

 \begin{lm}\label{lm 2-rank = 2}
 	The rank of the $2$-class group   $\mathrm{Cl}_2(L_d)$ of $L_{d}$ equals $2$  if and only if $d$  takes one of the following forms:
 	\begin{enumerate}[\rm1.]
 		\item $d=q_1q_2$ with   $q_1\equiv q_2 \equiv3\pmod8$.
 		\item $d=p_1p_2$ with   $ p_1\equiv p_2\equiv5\pmod8$.
 		\item $d=q_1q_2$ with   $q_1\equiv3\pmod8$ and $q_2\equiv7\pmod8$.
 		\item $d=pq$ with   $p\equiv5\pmod8$ and $q\equiv7\pmod8$.
 		\item $d=p$ with   $ p\equiv1\pmod8$ and $\left[p\equiv 9 \pmod{16} \text{ or } \genfrac(){}{0}{2}{p}_4\not=\genfrac(){}{0}{p}{2}_4\right] $.
 	\end{enumerate}
 \end{lm}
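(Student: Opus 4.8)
The statement to prove is the characterization of when $\mathrm{Cl}_2(L_d)$ has rank exactly $2$ — but this is cited as "proved in our earlier paper [Theorem 5.6]{ACZ}", so strictly speaking one only needs to point to the reference. Nonetheless, let me sketch how the argument goes, since it is the natural starting point for the new results. The key tool is genus theory combined with the tower of subfields: $L_d = \mathbb{Q}(\zeta_8,\sqrt d)$ sits over $\mathbb{Q}(i)$, over $\mathbb{Q}(\sqrt 2)$, and over $\mathbb{Q}(\sqrt{-2})$, and it contains the biquadratic field $\mathbb{Q}(\sqrt 2,\sqrt d)$ and its two siblings. First I would compute the rank of $\mathrm{Cl}_2(L_d)$ via the ambiguous class number formula applied to a well-chosen relative quadratic extension — most efficiently $L_d/\mathbb{Q}(\zeta_8)$ or $L_d/\mathbb{Q}(i,\sqrt d)$ — which reduces the rank to a count of ramified primes and a unit-norm index. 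Since $\mathbb{Q}(\zeta_8)$ has class number $1$ and a fully understood unit group, the genus-theoretic count becomes explicit: the $2$-rank is governed by the decomposition of the primes dividing $d$ in $\mathbb{Q}(\zeta_8)$ together with the ramification of $2$.

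**Key steps, in order.** (1) Reduce to a relative quadratic extension and write down the ambiguous class number formula (Chevalley's formula), so that $\mathrm{rank}\,\mathrm{Cl}_2(L_d) = (\text{number of ramified primes}) - 1 - (\text{a unit index term})$, at least for the $2$-rank piece; handle the $2$-rank more carefully by passing to the genus field. (2) Analyze how an odd prime $\ell \mid d$ splits in $\mathbb{Q}(\zeta_8)$ according to $\ell \bmod 8$: a prime $\equiv 1$ splits completely, $\equiv 3,5,7$ each split into two primes, so the number of primes of $\mathbb{Q}(\zeta_8)$ above $d$ depends sharply on the residues of the $q_i,p_i$ modulo $8$. (3) Track the contribution of the prime $2$, which is totally ramified in $\mathbb{Q}(\zeta_8)/\mathbb{Q}$ and ramifies further in $L_d$ exactly when $d \not\equiv 1 \pmod{?}$ — here the rational quartic residue symbols $\genfrac(){}{0}{2}{p}_4$ and $\genfrac(){}{0}{p}{2}_4$ enter, via the splitting of $2$ in $\mathbb{Q}(\sqrt p)$ and finer $\pmod{16}$ conditions, which is precisely the source of case 5. (4) Assemble the arithmetic: impose "$\mathrm{rank} = 2$", solve the resulting congruence conditions on the number $\omega(d)$ of prime divisors and their residues, and check each surviving family indeed gives rank $2$ (not $1$ or $\geq 3$) by also verifying the unit-index term. (5) Cross-check a few cases against the class number formulas (Propositions \ref{wada's f.} and \ref{Kurod's f.}) applied to $L_d^+ = \mathbb{Q}(\sqrt2,\sqrt d)$ and the quadratic subfields, using known $2$-class numbers $h_2(m)$ of real and imaginary quadratic fields.

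**Main obstacle.** The delicate part is the unit-index contribution in Chevalley's formula — equivalently, controlling $Q(L_d/k)$ or the Hasse index $Q_{L_d}$ — because the rank can drop below the naive "ramified primes minus one" count exactly when a fundamental unit of a subfield becomes a relative norm. For the $\mathbb{Q}(\zeta_8)$ base this means understanding when $1+i$, $\varepsilon_2 = 1+\sqrt 2$, and products thereof are norms from $L_d$; for $d = p$ a single prime, whether $\varepsilon_p$ or $\varepsilon_{2p}$ is a norm is governed by quartic residue symbols, which is why case 5 has the extra bracketed condition rather than being unconditional. So the crux is a careful norm-residue computation at the prime $2$ in the relevant quadratic subfields, matching it to the quartic-residue-symbol formalism recalled just above the lemma. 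Once that single index is pinned down case by case, the rest is bookkeeping of splitting types modulo $8$.

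Since this lemma is stated as already established in \cite{ACZ}, the present paper takes it as given; the proof above is only the motivating outline, and for full details we refer to \cite[Theorem 5.6]{ACZ}.
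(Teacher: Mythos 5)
Your proposal matches the paper exactly: the paper offers no proof of this lemma beyond the citation to \cite[Theorem 5.6]{ACZ}, and you correctly identify that the statement is imported from that earlier work and defer to it. Your accompanying sketch (ambiguous class number formula over a relative quadratic extension, splitting of the primes dividing $d$ in $\mathbb{Q}(\zeta_8)$ according to residues mod $8$, and the unit-norm index accounting for the quartic residue conditions in case 5) is a reasonable outline of how such a rank computation is carried out, but since both you and the paper rest on the citation, no further verification is required here.
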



We need also the following result.

 \begin{lm}\label{lm h2 is divisible by 16  in 5mod8}
 	Let $d=p_1p_2$, where $p_1, p_2$ are two rational primes such that $p_i\equiv 5\pmod 8$. Then $h_2(L_{d})$ is divisible by $16$.
 \end{lm}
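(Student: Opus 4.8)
The plan is to evaluate $h_2(L_d)$ through the Herglotz--Kuroda class number formula (Proposition~\ref{wada's f.}) for the imaginary multiquadratic field $L_d=\mathbb{Q}(\zeta_8,\sqrt d)=\mathbb{Q}(i,\sqrt2,\sqrt d)$, which has degree $2^3$, and then to bound each factor from below. Here $n=3$ and $L_d$ is imaginary, so $v=(n-1)(2^{n-2}-1)+2^{n-1}-1=5$, and the seven quadratic subfields of $L_d$ are $\mathbb{Q}(i)$, $\mathbb{Q}(\sqrt2)$, $\mathbb{Q}(\sqrt{-2})$, $\mathbb{Q}(\sqrt d)$, $\mathbb{Q}(\sqrt{-d})$, $\mathbb{Q}(\sqrt{2d})$, $\mathbb{Q}(\sqrt{-2d})$, the first three having class number $1$. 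Since the unit index $q(L_d)=[E_{L_d}:\prod_iE_{k_i}]$ is a power of $2$ (as is standard for multiquadratic fields), comparing $2$-parts in Proposition~\ref{wada's f.} gives
$$h_2(L_d)=\frac{q(L_d)}{2^5}\,h_2(d)\,h_2(-d)\,h_2(2d)\,h_2(-2d).$$

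Three inputs then finish the argument. First, $q(L_d)\ge2$: since $\zeta_8\in E_{L_d}$ while $\mathbb{Q}(i)$ is the only quadratic subfield of $L_d$ carrying a root of unity other than $\pm1$, the group $\prod_iE_{k_i}$ contains $\zeta_8^2=i$ but not $\zeta_8$, so $\zeta_8$ has order $2$ in $E_{L_d}/\prod_iE_{k_i}$. Second, genus theory applied to the quadratic subfields yields the $2$-ranks from the numbers of ramified primes: $\mathbb{Q}(\sqrt d)$ has discriminant $p_1p_2$ (two ramified primes, since $p_1p_2\equiv1\pmod8$), so $h_2(d)\ge2$, while $\mathbb{Q}(\sqrt{-d})$, $\mathbb{Q}(\sqrt{2d})$, $\mathbb{Q}(\sqrt{-2d})$ have discriminants $-4p_1p_2$, $8p_1p_2$, $-8p_1p_2$ respectively, each ramified at $2,p_1,p_2$, whence $h_2(-d),h_2(2d),h_2(-2d)\ge4$. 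Third, and crucially, $h_2(-d)\ge8$: factoring $-4p_1p_2$ into the prime discriminants $-4,p_1,p_2$, the off-diagonal entries of the Rédei matrix over $\mathbb{F}_2$ are governed by $\left(\frac{-1}{p_i}\right)=1$ and $\left(\frac{2}{p_i}\right)=-1$ (both because $p_i\equiv5\pmod8$) together with $\left(\frac{p_1}{p_2}\right)=\left(\frac{p_2}{p_1}\right)$; a direct check shows this matrix has rank $1$ in all cases, so by Rédei's theorem the $4$-rank of $\mathrm{Cl}_2(\mathbb{Q}(\sqrt{-p_1p_2}))$ equals $2-1=1$. Combined with $2$-rank $2$, this forces $\mathrm{Cl}_2(\mathbb{Q}(\sqrt{-p_1p_2}))\simeq\mathbb{Z}/2\oplus\mathbb{Z}/2^{e}$ with $e\ge2$, that is, $h_2(-d)\ge8$.

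Substituting these estimates,
$$h_2(L_d)\ge\frac{2}{2^5}\cdot2\cdot8\cdot4\cdot4=2^4=16,$$
and since $h_2(L_d)$ is a power of $2$ this gives $16\mid h_2(L_d)$. The main obstacle is the Rédei-matrix computation for $\mathbb{Q}(\sqrt{-p_1p_2})$ — in particular pinning down the entries attached to the prime $2$ — because the improvement $h_2(-d)\ge8$ is precisely what is needed: the elementary genus bounds by themselves only yield $h_2(L_d)\ge4$. An essentially equivalent route replaces Proposition~\ref{wada's f.} by the class number formula of Proposition~\ref{ class number f.} for the $V_4$-extension of CM-fields $L_d/\mathbb{Q}(\sqrt2)$ (its CM subextensions being $\mathbb{Q}(\zeta_8)$ and $\mathbb{Q}(\sqrt2,\sqrt{-d})$), after recording $Q_{\mathbb{Q}(\zeta_8)}=1$ and the relevant values of $\omega$; expanding $h_2(\mathbb{Q}(\sqrt2,\sqrt{\pm d}))$ via Proposition~\ref{Kurod's f.} then leads to the same bound, with $h_2(\mathbb{Q}(\sqrt2,\sqrt{-d}))\ge16$ playing the decisive role.
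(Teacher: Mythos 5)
Your proposal follows essentially the same route as the paper's proof: the Herglotz--Kuroda formula (Proposition \ref{wada's f.}) for the degree-$8$ imaginary field, the observation that $\zeta_8\notin\prod_i E_{k_i}$ forces $2\mid q(L_d)$, and then lower bounds on the $2$-class numbers of the four nontrivial quadratic subfields. Your Rédei-matrix computation showing that the $4$-rank of $\mathrm{Cl}_2(\mathbb{Q}(\sqrt{-p_1p_2}))$ equals $1$, hence $8\mid h_2(-p_1p_2)$, is correct and is a legitimate self-contained replacement for the paper's citation of Kaplan's Propositions $B_1'$ and $B_4'$.

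There is, however, a genuine gap in the bounds you claim for the two \emph{real} quadratic subfields. Counting ramified primes controls the \emph{narrow} class group: a real quadratic field with $t$ ramified primes has narrow $2$-rank $t-1$, but the ordinary $2$-class number can be half of the narrow one (precisely when $N(\varepsilon)=+1$), and its $2$-rank can drop to $t-2$. So ``two ramified primes $\Rightarrow h_2(p_1p_2)\ge 2$'' and, more importantly, ``three ramified primes $\Rightarrow h_2(2p_1p_2)\ge 4$'' are not valid deductions as stated; for instance $105=3\cdot5\cdot7$ has three ramified primes yet $h(\mathbb{Q}(\sqrt{105}))=2$. Both facts are true in the present situation $p_1\equiv p_2\equiv 5\pmod 8$, but establishing them requires controlling the norms of $\varepsilon_{p_1p_2}$ and $\varepsilon_{2p_1p_2}$ (or an additional $4$-rank argument), which is exactly what the paper outsources to Conner--Hurrelbrink, Corollaries 18.4, 19.7 and 19.8. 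The bound $4\mid h_2(2p_1p_2)$ is indispensable here: with only $2\mid h_2(2p_1p_2)$ your product yields $8\mid h_2(L_d)$ rather than $16$, so this step must be repaired, e.g.\ by supplying the missing reference or by an explicit argument for the norms of the fundamental units involved.
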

 \begin{proof}
 	We have $L_d=\mathbb Q(\sqrt{p_1p_2}, \sqrt2, i)$ is  an imaginary  multiquadratic number field of degree $2^3$. 	So by Proposition \ref{wada's f.} we have
 	\begin{eqnarray*}
 		h(L_{d})=\frac{1}{2^5}q(L_{d})h( p_1p_2) h(- p_1p_2)h(2p_1p_2)h(-2p_1p_2)h(2)h(-2)h(-1).		
 	\end{eqnarray*}
 It is known that $h_2(2)$, $h_2(-2)$ and  $h_2(-1)$  are equal to $1$, and by \cite[p. 350]{kaplan76} $h_2(-2p_1p_2)=4$. On the other hand,    \cite[Theorem 2.2]{azt2014structure} implies that $q(L_{d})$ is a power of $2$. So by
 passing to the $2$-part in the above equation, we get
 \begin{eqnarray}\label{eq in proof lm 3}
 	h_2(L_{d})	&=&\frac{1}{2^3}q(L_{d})h_2(p_1p_2) h_2(-p_1p_2) h_2(2p_1p_2).
 \end{eqnarray}
Note that:
\begin{enumerate}[\rm $\bullet$]
	\item By \cite[ Corollary 18.4]{connor88}, $h_2(p_1p_2)$ is divisible by $2$.
	\item By    \cite[pp. 348-349]{kaplan76} (Propositions $B_1'$ and  $B_4'$), $h_2(-p_1p_2)$ is divisible by $8$.
		\item By \cite[Corollaries 18.4, 19.7 and 19.8 ]{connor88}, $h_2(2p_1p_2)$ is divisible by $4$.
\end{enumerate}
On one hand, $\zeta_{8}=\frac{1+i}{\sqrt{2}}\in E_{L_d}$. On the other hand, letting  $k_i$ be  the  quadratic subfields	of $L_d$, one gets easily
$$\prod_{i}E_{k_i}=\langle i, \varepsilon_2,\varepsilon_{-2}, \varepsilon_{p_1p_2}, \varepsilon_{2p_1p_2},\varepsilon_{-p_1p_2} ,\varepsilon_{-2p_1p_2}  \rangle.$$
So the $8$-th root of unity $\zeta_{8}\not\in \prod_{i}E_{k_i}$. Thus  $\overline{1}$ and  $\overline{\zeta_{8}}$ are two distinct cosets in the quotient $E_{L_d}/ \prod_{i}E_{k_i}$.
   Thus,  $q(L_{d})$ is divisible by $2$.  It follows  by  the equality \eqref{eq in proof lm 3} above that   $h_2(L_{d})$ is divisible by $\frac{1}{2^3}\cdot 2 \cdot  2\cdot  8\cdot  4=16$ as we wished to prove.
 \end{proof}

\begin{example}
Let $d=p_1p_2$ be as in the above Lemma.	Using  PARI/GP calculator version 2.9.4 (64bit), Dec 20, 2017, for $5\leq p_i \leq 200$, $i=1,2$,  we could not find a field $L_d$ such that $h_2(L_{d})=16$.
		We have the following examples.
	\begin{enumerate}[\rm 1.]
			\item For $p_1=13$ and $p_2=5$, we have $h_2(L_{13.5})=32$.
		\item For $p_1=37$ and $p_2=53$, we have $h_2(L_{37.53})=64$.
	\end{enumerate}
\end{example}

\begin{rmk} With   hypothesis and notations of Lemma \ref{lm h2 is divisible by 16  in 5mod8}, we find in  \cite{azt2014structure}  a unit group of $L_d$.
\end{rmk}

 \begin{lm}[\cite{ZAT-15}, Lemma 2]\label{3:105}
	Let $d\equiv1\pmod4$ be a positive square free integer and   $\varepsilon_d=x+y\sqrt d$ be the fundamental unit of  $\QQ(\sqrt d)$. Assume   $N(\varepsilon_d)=1$, then
	\begin{enumerate}[\rm\indent1.]
		\item $x+1$ and $x-1$ are not squares in  $\NN$, i.e., $2\varepsilon_{d}$ is not a square in  $\QQ(\sqrt{d})$.
		\item For all prime  $p$ dividing   $d$, $p(x+1)$ and $p(x-1)$ are not squares in  $\NN$.
	\end{enumerate}
\end{lm}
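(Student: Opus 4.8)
The plan is to deduce both statements from a single clean assertion: $2\varepsilon_d$ is not a square in $K:=\QQ(\sqrt d)$, and more generally $2p\varepsilon_d$ is not a square in $K$ for every prime $p\mid d$.

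The first step is to record the algebraic identity that does all the work. Since $N(\varepsilon_d)=1$ we have $\bar\varepsilon_d=\varepsilon_d^{-1}$, hence $\varepsilon_d+\varepsilon_d^{-1}=\mathrm{Tr}_{K/\QQ}(\varepsilon_d)=2x$, and multiplying out gives
$$(1+\varepsilon_d)^2=\varepsilon_d\big(\varepsilon_d^{-1}+2+\varepsilon_d\big)=2(x+1)\varepsilon_d,\qquad (1-\varepsilon_d)^2=2(x-1)\varepsilon_d.$$
From this the reductions are immediate. If $x+1=n^2$ (resp.\ $x-1=n^2$) with $n\in\NN$, then $2\varepsilon_d=\big((1+\varepsilon_d)/n\big)^2$ (resp.\ $\big((1-\varepsilon_d)/n\big)^2$) is a square in $K$; and if $p(x\pm1)=n^2$ with $p\mid d$, then $p\mid n$ because $p$ is prime, so $x\pm1=p(n/p)^2$ and $2p\varepsilon_d=\big(p(1\pm\varepsilon_d)/n\big)^2$ is a square in $K$. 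Conversely, if $2\varepsilon_d=\beta^2$ in $K$ then $x+1=\big((1+\varepsilon_d)/\beta\big)^2$ is a square in $K$; writing $x+1=u^2w$ with $w$ squarefree forces $\QQ(\sqrt w)\subseteq\QQ(\sqrt d)$, so $w=1$ or $w=d$, and in the latter case the relation $(x-1)(x+1)=dy^2$ makes $x-1$ a rational (hence integer) square. Thus "$x+1$ not a square in $\NN$ and $x-1$ not a square in $\NN$" is equivalent to "$2\varepsilon_d$ not a square in $K$", which is the biconditional in item 1, and both items reduce to the two non-squareness facts.

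For the non-squareness, the idea is a $2$-adic valuation count. Let $\mathfrak l$ be a prime of $\mathcal O_K$ lying above $2$ and $v_{\mathfrak l}$ its normalized valuation on $K^{\times}$. Because $d$ is squarefree with $d\equiv1\pmod4$, the discriminant of $K$ is the odd integer $d$, so $2$ is unramified in $K$ and $v_{\mathfrak l}(2)=1$; since $\varepsilon_d$ is a unit and $p$ is odd, $v_{\mathfrak l}(\varepsilon_d)=v_{\mathfrak l}(p)=0$. Hence $v_{\mathfrak l}(2\varepsilon_d)=v_{\mathfrak l}(2p\varepsilon_d)=1$, an odd number, while the valuation of any square is even; so neither element is a square in $K$, which closes the argument.

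I do not expect a genuine obstacle here; the only points needing care are the bookkeeping in the reduction — in particular that $p\mid n^2$ implies $p\mid n$, and the "$w=d$" case in the converse — and applying the valuation statement at a prime above $2$ rather than above $p$. If one prefers to sidestep the converse entirely, one can just prove the two itemized assertions directly through the forward implications above, reading the parenthetical "i.e." in item 1 as the motivating reformulation.
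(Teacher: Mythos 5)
Your proof is correct. Note that the paper itself does not prove this lemma at all: it is imported verbatim from [ZAT-15, Lemma 2], so there is no internal argument to compare against. Your route is a clean, self-contained one. The identities $(1\pm\varepsilon_d)^2=2(x\pm1)\varepsilon_d$ (valid because $N(\varepsilon_d)=1$ gives $\varepsilon_d+\varepsilon_d^{-1}=2x$) correctly reduce both items to the single statement that $2\varepsilon_d$ and $2p\varepsilon_d$ are non-squares in $\QQ(\sqrt d)$, and the valuation count at a prime $\mathfrak l\mid 2$ is decisive precisely because $d\equiv 1\pmod 4$ forces $2$ to be unramified, so $v_{\mathfrak l}(2\varepsilon_d)=v_{\mathfrak l}(2p\varepsilon_d)=1$ is odd. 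Your treatment of the ``i.e.''\ as a genuine biconditional, including the $w=d$ subcase via $(x-1)(x+1)=dy^2$, is also sound. The classical proof of such statements (the one used in the cited source and in the related arguments the paper does spell out, e.g.\ in Lemma 4) instead works directly from the factorization $(x-1)(x+1)=dy^2$ together with $\gcd(x-1,x+1)\mid 2$ and unique factorization in $\ZZ$, deriving a congruence contradiction; your ramification argument buys uniformity (one computation handles $x\pm1$ and $p(x\pm1)$ simultaneously) at the cost of invoking a little algebraic number theory where the original is purely elementary. The only implicit hypothesis you share with the statement itself is that $x,y$ are rational integers (which, for $d\equiv1\pmod 4$, is how the lemma is applied in this paper after a separate parity argument), but that is a feature of the statement, not a gap in your proof.
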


 \begin{lm}\label{lm on unit undex}
 	Let $d=q_1q_2$, with   $q_1\equiv q_2 \equiv3\pmod8$ are two primes such that $\left(\dfrac{	q_1}{	q_2}\right)=1$. Then we have $E_{L_d}=\langle \zeta_{8},\varepsilon_{  2},  \varepsilon_{   q_1q_2} , \sqrt{\varepsilon_{   2q_1q_2}}\rangle$, and thus $q(L_d)=4$.
\end{lm}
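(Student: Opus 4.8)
The plan is to compute $E_{L_d}$ explicitly and then read off $q(L_d)=[E_{L_d}:\prod_i E_{k_i}]$. Write $L_d=\mathbb{Q}(i,\sqrt{2},\sqrt{q_1q_2})$, so that $L_d^{+}=\mathbb{Q}(\sqrt{2},\sqrt{q_1q_2})$ and the seven quadratic subfields of $L_d$ are $\mathbb{Q}(i)$, $\mathbb{Q}(\sqrt{\pm2})$, $\mathbb{Q}(\sqrt{\pm q_1q_2})$, $\mathbb{Q}(\sqrt{\pm2q_1q_2})$. The four imaginary ones contribute only roots of unity, so $\prod_i E_{k_i}=\langle i,\varepsilon_2,\varepsilon_{q_1q_2},\varepsilon_{2q_1q_2}\rangle$, while $W_{L_d}=\langle\zeta_{8}\rangle$ (as $L_d/\mathbb{Q}$ is multiquadratic and $\sqrt{-3}\notin L_d$). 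The proof then runs in three steps: (i) determine $E_{L_d^{+}}$; (ii) pass to $E_{L_d}$ through the Hasse index $Q_{L_d}=[E_{L_d}:W_{L_d}E_{L_d^{+}}]\in\{1,2\}$; (iii) compare $E_{L_d}$ with $\prod_i E_{k_i}$.

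For step (i), $L_d^{+}$ is a real biquadratic field with quadratic subfields $\mathbb{Q}(\sqrt{2})$, $\mathbb{Q}(\sqrt{q_1q_2})$, $\mathbb{Q}(\sqrt{2q_1q_2})$. Since $q_1\equiv3\pmod4$ divides $q_1q_2$ and $2q_1q_2$, one has $N(\varepsilon_{q_1q_2})=N(\varepsilon_{2q_1q_2})=1$, whereas $N(\varepsilon_2)=-1$. By the classification of unit groups of real biquadratic fields (Kubota), a system of fundamental units of $L_d^{+}$ is obtained from $\{\varepsilon_2,\varepsilon_{q_1q_2},\varepsilon_{2q_1q_2}\}$ by adjoining the square root of a product of a subset of them; evaluating signs in the four real embeddings of $L_d^{+}$ shows that $\varepsilon_{q_1q_2}$ and $\varepsilon_{2q_1q_2}$ are totally positive while $\varepsilon_2$ is not, so the only square roots that can lie in $L_d^{+}$ are those of $\varepsilon_{q_1q_2}$, $\varepsilon_{2q_1q_2}$ and $\varepsilon_{q_1q_2}\varepsilon_{2q_1q_2}$. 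Writing $\varepsilon_{q_1q_2}=x+y\sqrt{q_1q_2}$, the field $\mathbb{Q}(\sqrt{q_1q_2},\sqrt{\varepsilon_{q_1q_2}})$ is biquadratic and its third quadratic subfield is governed by the factorization $(x-1)(x+1)=q_1q_2y^{2}$; since $\varepsilon_{q_1q_2}$ is fundamental and, by Lemma \ref{3:105}, $x\pm1$ is not a square, this factorization forces $\mathbb{Q}(\sqrt{q_1q_2},\sqrt{\varepsilon_{q_1q_2}})=\mathbb{Q}(\sqrt{q_1},\sqrt{q_2})\not\subseteq L_d^{+}$, whence $\sqrt{\varepsilon_{q_1q_2}}\notin L_d^{+}$ and therefore also $\sqrt{\varepsilon_{q_1q_2}\varepsilon_{2q_1q_2}}\notin L_d^{+}$.

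It thus remains to prove that $\sqrt{\varepsilon_{2q_1q_2}}\in L_d^{+}$, which is the heart of the lemma. Writing $\varepsilon_{2q_1q_2}=x+y\sqrt{2q_1q_2}$ ($x$ necessarily odd) and using $2\varepsilon_{2q_1q_2}=(\sqrt{x+1}+\sqrt{x-1})^{2}$, one gets $\sqrt{\varepsilon_{2q_1q_2}}=\sqrt{m}+\sqrt{m+1}$ with $m=\frac{x-1}{2}$, where $m(m+1)=2q_1q_2(y/2)^{2}$ is a product of two coprime integers; hence the pair of squarefree parts $\{\,\mathrm{sf}(m),\mathrm{sf}(m+1)\,\}$ is one of $\{1,2q_1q_2\}$, $\{2,q_1q_2\}$, $\{q_1,2q_2\}$, $\{q_2,2q_1\}$. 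The first is impossible, as it would make $\varepsilon_{2q_1q_2}$ a square in $\mathbb{Q}(\sqrt{2q_1q_2})$; and the last two lead to equations of the shape $q_ia^{2}\pm2q_jb^{2}=\pm1$ which are insoluble: reducing modulo $q_1$ and modulo $q_2$ and invoking $q_i\equiv3\pmod8$ (so $\left(\frac{2}{q_i}\right)=\left(\frac{-1}{q_i}\right)=-1$) together with $\left(\frac{q_1}{q_2}\right)=1$, whence $\left(\frac{q_2}{q_1}\right)=-1$ by quadratic reciprocity, one contradicts the requisite Legendre symbol in every case. Therefore $\{\,\mathrm{sf}(m),\mathrm{sf}(m+1)\,\}=\{2,q_1q_2\}$, so $\sqrt{\varepsilon_{2q_1q_2}}=a\sqrt{2}+b\sqrt{q_1q_2}\in L_d^{+}$ for suitable $a,b\in\mathbb{N}$, and we conclude $E_{L_d^{+}}=\langle-1,\varepsilon_2,\varepsilon_{q_1q_2},\sqrt{\varepsilon_{2q_1q_2}}\rangle$.

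For step (ii), $L_d/L_d^{+}$ is a CM-extension, and $Q_{L_d}=2$ would force $\zeta_{8}\mu$ to be a square in $L_d$ for some $\mu\in E_{L_d^{+}}$. Since $L_d=\mathbb{Q}(\zeta_{8})(\sqrt{q_1q_2})$ and $\varepsilon_{2q_1q_2}=(\sqrt{\varepsilon_{2q_1q_2}})^{2}$, we may reduce modulo squares of $L_d$ and, using $E_{\mathbb{Q}(\zeta_{8})}=\langle\zeta_{8},\varepsilon_2\rangle$, the fact that $q_1,q_2$ are unramified in $\mathbb{Q}(\zeta_{8})$ (so $(q_i)$ is not a square ideal), and the classes of $\varepsilon_{q_1q_2}$ and $\sqrt{\varepsilon_{2q_1q_2}}$ in $L_d^{\times}/L_d^{\times2}$, check that no such product is a square in $L_d$; hence $Q_{L_d}=1$ and $E_{L_d}=W_{L_d}E_{L_d^{+}}=\langle\zeta_{8},\varepsilon_2,\varepsilon_{q_1q_2},\sqrt{\varepsilon_{2q_1q_2}}\rangle$. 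Finally, for step (iii), $E_{L_d}$ contains $\prod_i E_{k_i}=\langle i,\varepsilon_2,\varepsilon_{q_1q_2},\varepsilon_{2q_1q_2}\rangle$, and modulo this subgroup $E_{L_d}$ is generated by the images of $\zeta_{8}$ (of order $2$, since $i=\zeta_{8}^{2}$) and of $\sqrt{\varepsilon_{2q_1q_2}}$ (of order $2$, since $\varepsilon_{2q_1q_2}$ already lies in the subgroup), and these images are independent; therefore $q(L_d)=[E_{L_d}:\prod_i E_{k_i}]=4$. The genuinely delicate point is the Legendre-symbol analysis in step (i) pinning down the factorization $\{2,q_1q_2\}$ attached to $\varepsilon_{2q_1q_2}$; steps (ii) and (iii) are routine bookkeeping once the unit group of $L_d^{+}$ is in hand.
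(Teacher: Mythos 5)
Your proof is correct and follows essentially the same route as the paper: pin down the factorizations of $x\mp1$ attached to $\varepsilon_{q_1q_2}$ and $\varepsilon_{2q_1q_2}$ via Lemma \ref{3:105} and Legendre-symbol computations exploiting $q_i\equiv3\pmod8$ and $\genfrac(){}{0}{q_1}{q_2}=1$, deduce that $\{\varepsilon_2,\varepsilon_{q_1q_2},\sqrt{\varepsilon_{2q_1q_2}}\}$ is a fundamental system of units of $L_d^+$ (Kubota/Wada), and then pass to $E_{L_d}$ and read off the index $4$. The only step you merely sketch --- that $Q_{L_d}=1$, i.e.\ that no $\zeta_8\mu$ is a square --- is exactly the step the paper delegates to \cite[Proposition 2]{azizi99unite}, while your explicit four-case analysis of the squarefree parts attached to $\varepsilon_{2q_1q_2}$ supplies the detail the paper compresses into ``we similarly show''.
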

\begin{proof}
As $q_1q_2\equiv 1\pmod 8$, we claim that the unit $\varepsilon_{q_1q_2}$ can be written as $\varepsilon_{q_1q_2}=a+b\sqrt{q_1q_2}$, where  $a$ and $b$ are two integers. Indeed, suppose that $\varepsilon_{q_1q_2}=({\alpha+\beta \sqrt d})/{2}$ where $\alpha$, $\beta$
are  two integers. 	Since  $N(\varepsilon_{q_1q_2})=1$, one deduces that
$\alpha^2-4=\beta^2d$, hence  $\alpha^2-4\equiv \beta^2\pmod8$. On the other hand, if we suppose that $\alpha$ and $\beta$ are odd, then $\alpha^2\equiv \beta^2\equiv1\pmod8$, but this implies the contradiction $-3\equiv1\pmod8$. Thus $\alpha$ and $\beta$ are even and our claim is established.

It is known that  $N(\varepsilon_{q_1q_2})=1$, so by the unique factorization in $\mathbb{Z}$ and Lemma \ref{3:105}  one gets
$$(1):\ \left\{ \begin{array}{ll}
a+1=2q_1b_1^2\\
a-1=2q_2b_2^2,
\end{array}\right.\quad
\text{ or }\quad
(2):\ \left\{ \begin{array}{ll}
a-1=2q_1b_1^2\\
a+1=2q_2b_2^2,
\end{array}\right.
$$	

for some integers $b_1$ and $b_2$   such that   $b=2b_1b_2$.

If the system $(2)$ holds, then $$-1=\left(\frac{2q_1}{q_2}\right)=\left(\frac{a-1}{q_2}\right)=\left(\frac{a+1- 2}{q_2}\right)= \left(\frac{ -2}{q_2}\right) =1.$$ Which is absurd.
Therefore
$ \left\{ \begin{array}{ll}
a+1=2q_1b_1^2\\
a-1=2q_2b_2^2.
\end{array}\right. $ Thus,  $\sqrt{ \varepsilon_{ q_1q_2}}=b_1\sqrt{q_1}+b_2\sqrt{q_2}$.  So $\varepsilon_{ d}$ is not a square in $L_{d}^+$.

We have $\varepsilon_{2d}$ 	has a positive norm. Put $\varepsilon_{2d}=x+y\sqrt{2q_1q_2}$. We similarly show that
 $\sqrt{2\varepsilon_{2d}}=y_1 +y_2\sqrt{2q_1q_2}\in L_{d}^+$, for some integers $y_1$ and $y_2$.  Therefore, $\varepsilon_{2d}$ is  a square in $L_{d}^+$ since $\sqrt2\in L_{d}^+$.
   As $\varepsilon_{ 2}$ has a negative norm, then using
 the algorithm described in \cite{wada} (or in \cite[p. 113]{azizi99unitedeg8}) we have $\{\varepsilon_{  2},  \varepsilon_{   q_1q_2} , \sqrt{\varepsilon_{   2q_1q_2}}\}$
 	is a fundamental system of unit of
 	$L^+_d$. Hence the result follows  easily  by \cite[Proposition 2]{azizi99unite}.
\end{proof}

 To continue, consider the following parameters:

 \begin{enumerate}[\rm $\bullet$]
 	\item For  a rational prime $p$ such that $p\equiv 1\pmod 8$, set $p=u^2-2v^2$ where $u$ and $v$ are two positive integers  such that $u\equiv 1\pmod8$ (for the existence of $u$ and $v$ see \cite{leonard1982divisibilityby16}).
 	\item For two primes $q_1$ and $q_2$ such that  $q_1\equiv q_2\equiv 3\pmod 8$,   $\genfrac(){}{0}{q_1}{q_2}=1$, there exist five  integers    $X,Y,k,l$ and $m$ such that $2q_2=k^2X^2+2lXY+2mY^2$ and $q_1=l^2-2k^2m$ (cf. \cite[p. 356]{kaplan76}).
 \end{enumerate}

 \begin{theorem}\label{thm (2,4)}
 	Let $d$ be an odd positive square free integer. Then 	$\mathrm{Cl}_2(L_d)\simeq(2, 4)$ if and only if $d$ takes one of the two following forms:
 	\begin{enumerate}[\rm 1.]
 		\item $d=p\equiv 9\pmod{16}$ is a  prime  such that $\genfrac(){}{0}{2}{p}_4\not=\genfrac(){}{0}{p}{2}_4$and  $\genfrac(){}{0}{u}{p}_4 =-1$.
 		\item $d=q_1q_2$, with  $q_1\equiv q_2\equiv 3\pmod 8$ are primes such that  $\genfrac(){}{0}{q_1}{q_2}=1$ and       $\genfrac(){}{0}{-2}{\left|k^2X+lY\right|}=-1$.
 	\end{enumerate}	
 \end{theorem}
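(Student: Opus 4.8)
The plan is to combine the rank information already at hand with a count of the order of $\mathrm{Cl}_2(L_d)$. A finite abelian $2$-group of rank $2$ and order $8$ is necessarily isomorphic to $(2,4)$, so Lemma \ref{lm 2-rank = 2} reduces the statement to the following: $\mathrm{Cl}_2(L_d)\simeq(2,4)$ if and only if $d$ has one of the five shapes of that lemma and, in addition, $h_2(L_d)=8$. The shape $d=p_1p_2$ with $p_i\equiv5\pmod8$ is discarded at once by Lemma \ref{lm h2 is divisible by 16  in 5mod8}, which gives $16\mid h_2(L_d)$; the shapes $d=q_1q_2$ with $q_1\equiv3,\ q_2\equiv7\pmod8$ and $d=pq$ with $p\equiv5,\ q\equiv7\pmod8$ are discarded by an argument parallel to the proof of Lemma \ref{lm h2 is divisible by 16  in 5mod8} (or via the classification of \cite{ACZ}), which shows $h_2(L_d)\neq8$ there. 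There remain the two shapes $d=q_1q_2$ with $q_1\equiv q_2\equiv3\pmod8$ and $d=p$ with $p\equiv1\pmod8$, and for each the task is to decide when $h_2(L_d)=8$.

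Consider first $d=q_1q_2$ with $q_1\equiv q_2\equiv3\pmod8$. By Lemma \ref{lm 2-rank = 2} every such $d$ gives $\mathrm{Cl}_2(L_d)$ of rank $2$; when $\genfrac(){}{0}{q_1}{q_2}=-1$ one checks (again via Proposition \ref{wada's f.}, or from \cite{ACZ}) that $\mathrm{Cl}_2(L_d)\not\simeq(2,4)$, so we may assume $\genfrac(){}{0}{q_1}{q_2}=1$, and then Lemma \ref{lm on unit undex} supplies $q(L_d)=4$. Listing the seven quadratic subfields of the imaginary octic field $L_d=\mathbb{Q}(i,\sqrt2,\sqrt{q_1q_2})$ and using $h_2(-1)=h_2(2)=h_2(-2)=1$, Proposition \ref{wada's f.} gives
\[
h_2(L_d)=\frac{1}{2^{5}}\,q(L_d)\,h_2(q_1q_2)\,h_2(-q_1q_2)\,h_2(2q_1q_2)\,h_2(-2q_1q_2)=\frac{1}{8}\,h_2(q_1q_2)\,h_2(-q_1q_2)\,h_2(2q_1q_2)\,h_2(-2q_1q_2),
\]
so $h_2(L_d)=8$ precisely when this product is $64$. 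I would then insert the exact $2$-parts of $h_2(q_1q_2)$, $h_2(-q_1q_2)$, $h_2(2q_1q_2)$, $h_2(-2q_1q_2)$ for $q_i\equiv3\pmod8$ with $\genfrac(){}{0}{q_1}{q_2}=1$, as given by Kaplan \cite{kaplan76} (and \cite{connor88} for the real subfields): each factor is divisible by an explicit power of $2$, the product is forced to be divisible by $64$, and it equals $64$ exactly when the single factor whose $2$-valuation depends on the form data $2q_2=k^2X^2+2lXY+2mY^2$, $q_1=l^2-2k^2m$ is as small as possible --- and that minimality is precisely the condition $\genfrac(){}{0}{-2}{\left|k^2X+lY\right|}=-1$. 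This produces the second item of Theorem \ref{thm (2,4)}.

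Now take $d=p\equiv1\pmod8$; by Lemma \ref{lm 2-rank = 2} the constraint needed for $\mathrm{rank}\,\mathrm{Cl}_2(L_d)=2$ is that $p\equiv9\pmod{16}$ or $\genfrac(){}{0}{2}{p}_4\neq\genfrac(){}{0}{p}{2}_4$. The scheme is the same. First determine $q(L_d)$ by an argument of the type used in Lemma \ref{lm on unit undex} (exhibiting $\zeta_8\notin\prod_iE_{k_i}$ and a fundamental system of units of $L_d^+$ obtained from the subfield data via the algorithm of \cite{wada}); then Proposition \ref{wada's f.} reduces $h_2(L_d)$ to $\frac{1}{2^{5}}\,q(L_d)\,h_2(p)\,h_2(2p)\,h_2(-p)\,h_2(-2p)$, into which one substitutes the known $2$-class numbers of $\mathbb{Q}(\sqrt p)$, $\mathbb{Q}(\sqrt{2p})$, $\mathbb{Q}(\sqrt{-p})$, $\mathbb{Q}(\sqrt{-2p})$ for $p\equiv1\pmod8$. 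Here the Lemma \ref{lm 2-rank = 2} hypothesis, refined by the residue of $p$ mod $16$ and by the comparison of $\genfrac(){}{0}{2}{p}_4$ with $\genfrac(){}{0}{p}{2}_4$, pins down the $2$-valuations of $h_2(-p)$ and $h_2(-2p)$, while the $2$-valuation of $h_2(2p)$ --- equivalently, via $p=u^2-2v^2$ with $u\equiv1\pmod8$, whether $\varepsilon_{2p}$ (or $2\varepsilon_{2p}$) becomes a square in $L_d^+$ --- is governed by $\genfrac(){}{0}{u}{p}_4$. Only when $p\equiv9\pmod{16}$, $\genfrac(){}{0}{2}{p}_4\neq\genfrac(){}{0}{p}{2}_4$ and $\genfrac(){}{0}{u}{p}_4=-1$ do these combine, through the displayed reduction, to give $h_2(L_d)=8$; this is the first item of Theorem \ref{thm (2,4)}. (Alternatively one could run this case through Kuroda's formula, Proposition \ref{Kurod's f.}, for the $V_4$-extension $L_d/\mathbb{Q}(i)$ with quadratic subfields $\mathbb{Q}(\zeta_8)$, $\mathbb{Q}(i,\sqrt p)$, $\mathbb{Q}(i,\sqrt{2p})$, whose $2$-class numbers are accessible from \cite{azizitaous(24)(222)}.)

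The main obstacle is that throughout one needs the \emph{exact} $2$-part of the relevant quadratic class numbers, not merely a divisibility --- in particular one must separate ``$h_2=8$'' from ``$h_2=16$'' --- and one must correctly match the various rational quartic residue symbols ($\genfrac(){}{0}{\cdot}{\cdot}_4$, the form symbol $\genfrac(){}{0}{-2}{\left|k^2X+lY\right|}$, and the symbol attached to $u$) against these valuations; this is where the fine information of \cite{kaplan76}, \cite{connor88} and \cite{leonard1982divisibilityby16} together with the arithmetic of biquadratic residues is indispensable, and where the case bookkeeping is most error-prone. A secondary point requiring care is to verify that $q(L_d)$ takes the asserted value uniformly over each family, since $h_2(L_d)$ in Proposition \ref{wada's f.} is directly proportional to it.
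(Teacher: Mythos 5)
Your overall strategy (reduce to the rank--$2$ list of Lemma \ref{lm 2-rank = 2}, then decide when $h_2(L_d)=8$) is exactly the paper's, and your treatment of the case $d=q_1q_2$, $q_1\equiv q_2\equiv 3\pmod 8$, coincides with the paper's: Proposition \ref{wada's f.} with $q(L_d)=4$ from Lemma \ref{lm on unit undex}, the values $h_2(q_1q_2)=1$, $h_2(-q_1q_2)=4$, $h_2(2q_1q_2)=2$, giving $h_2(L_d)=h_2(-2q_1q_2)$, and then Kaplan's criterion for $h_2(-2q_1q_2)=8$. One correction there: your claim that $\genfrac(){}{0}{q_1}{q_2}=-1$ forces $\mathrm{Cl}_2(L_d)\not\simeq(2,4)$ is false. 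Since $q_1\equiv q_2\equiv 3\pmod 4$, quadratic reciprocity gives $\genfrac(){}{0}{q_1}{q_2}=-\genfrac(){}{0}{q_2}{q_1}$, so exactly one of the two symbols is $+1$ and the hypothesis $\genfrac(){}{0}{q_1}{q_2}=1$ is only a normalization of the labels (the paper says ``without loss of generality''); there is no excluded subcase here.

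For $d=p\equiv 1\pmod 8$ there are two genuine gaps. First, the decisive numerical input is misplaced: $\genfrac(){}{0}{u}{p}_4$ does not govern $h_2(2p)$ or whether $\varepsilon_{2p}$ becomes a square in $L_d^+$; it is Leonard--Williams' criterion for $16\mid h(-2p)$, i.e.\ it separates $h_2(-2p)=8$ from $h_2(-2p)\ge 16$ for the \emph{imaginary} field $\mathbb{Q}(\sqrt{-2p})$. As your computation stands, the condition $\genfrac(){}{0}{u}{p}_4=-1$ would not emerge from the factor you attach it to. Second, your route through Proposition \ref{wada's f.} over $\mathbb{Q}$ requires the exact value of $q(L_p)$ for $p\equiv 1\pmod 8$, which you never determine and which is precisely the delicate point in this family. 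The paper sidesteps both issues: it applies Proposition \ref{ class number f.} to $L_p/\mathbb{Q}(\sqrt 2)$, uses $Q_{L_p}=1$ (Azizi--Taous) and $Q_{K'}=1$ to get $h_2(L_p)=\tfrac14\,h_2(L_p^+)\,h_2(-p)\,h_2(-2p)$, then invokes Ku\v{c}era's parity theorem to get $h_2(L_p^+)=1$ when $\genfrac(){}{0}{2}{p}_4=1$ and $\genfrac(){}{0}{p}{2}_4=-1$, Brown's result for $h_2(-p)=4$, Kaplan for $8\mid h_2(-2p)$, and only then Leonard--Williams to decide $h_2(-2p)=8$. Finally, the subcase $\genfrac(){}{0}{2}{p}_4=\genfrac(){}{0}{p}{2}_4=-1$ must be eliminated explicitly (the paper shows $16\mid h_2(L_p)$ there); your concluding ``only when\dots'' sentence asserts this but does not establish it. So item 2 of the theorem is essentially proved by your proposal, but item 1 is a plan whose key step rests on a misattributed criterion and an undetermined unit index.
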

 \begin{proof}
 	It suffices to determine for which forms of $d$ appearing in Lemma \ref{lm 2-rank = 2}, we have $h_2(L_d)=8$. Let us firstly eliminate some cases.	
 	\begin{enumerate}[\rm $\bullet$]
 		\item 	By \cite[Propositions 5.13 and 5.14]{ACZ}, $h_2(L_d)\not=8$ whenever $d$ takes  forms in  the third and the fourth item of   Lemma \ref{lm 2-rank = 2}.
 		\item The form of $d$ in the second item of   Lemma \ref{lm 2-rank = 2} is also eliminated 	by Lemma \ref{lm h2 is divisible by 16  in 5mod8}.
 		\item 	Note that if $p\equiv 1\pmod{16} \text{ and  }  \genfrac(){}{0}{2}{p}_4\not= \genfrac(){}{0}{p}{2}_4$, then by  \cite[Theorem 5.7]{ACZ}, $h_2(L_p)\not=8$.
 	\end{enumerate}

 	Note that by  results in  the beginning of
 	Section $3$, $p\equiv 9\pmod{16}$ implies that $\genfrac(){}{0}{p}{2}_4=-1$. Hence, by  Lemma \ref{lm 2-rank = 2},  we have  to check the following cases.
 	\begin{enumerate}[\rm(I)]
 		\item $d=p$, is a prime such that $\genfrac(){}{0}{2}{p}_4= \genfrac(){}{0}{p}{2}_4=-1$.\label{ frm i}
 		\item $d=p$, is a prime such that $p\equiv 9\pmod{16} \text{ and  }  \genfrac(){}{0}{2}{p}_4\not= \genfrac(){}{0}{p}{2}_4$.\label{ frm ii}
 		\item $d=q_1q_2$, for two primes $q_1$ and $q_2$ such that  $q_1\equiv q_2\equiv 3\pmod 8$.\label{ frm iii}
 	\end{enumerate}

Let $p\equiv 1 \pmod 8$ be a prime.
Set  $L_p^+=\mathbb{Q}(\sqrt{2}, \sqrt{p})$, $K=\mathbb{Q}(\sqrt{2}, i)$  and $K'=\mathbb{Q}(\sqrt{2}, \sqrt{-p})$. By applying Proposition \ref{ class number f.} to the extension $L_p/\mathbb{Q}(\sqrt{2})$, we have
\begin{eqnarray*}
 h(L_p)=\frac{Q_{L_p}}{Q_{K}Q_{K'}}\frac{\omega_{L_p}}{\omega_{K} \omega_{K'}}
 \frac{h( L_{p}^+)h(K)h(K')}{h(\mathbb{Q}(\sqrt{2}))^2}\cdot
\end{eqnarray*}

We have $ h(\mathbb{Q}(\sqrt{2}))=h(K)=1$. By \cite[Théorème 3]{taous2008}, $Q_{L_p}=1$ and by \cite[Lemma 2.5]{ACZ} $Q_{K}=1$. Since $ \omega_{L_p}= \omega_{K}=8$ and $ \omega_{K'}=2$, then by passing to the $2$-part in the above equality we get
\begin{eqnarray}\label{eq 2}
h_2(L_p)=\frac{1}{2Q_{K'}}h_2( L_{p}^+)h_2(K')\cdot
\end{eqnarray}

As $\varepsilon_2$ has  a negative norm, so by the  item $(2)$ of Section $3$ of \cite{azizi99unite} we obtain that
$E_{K'}=\langle -1, \varepsilon_2  \rangle$. This in turn implies that $q(K')=Q_{K'}=1$. From which we infer, by Proposition \ref{wada's f.}, that
 $h_2(K')=\frac{1}{2} \cdot 1 \cdot h_2(2) h_2(-p)  h_2(-2p)=\frac{1}{2}     h_2(-p)  h_2(-2p)$. It follows, by the equality \eqref{eq 2}, that

\begin{eqnarray}\label{eq 3}
h_2(L_p)=\frac{1}{4}h_2( L_{p}^+)  h_2(-p)  h_2(-2p)\cdot
\end{eqnarray}

 Note that from  \cite[Theorem 2]{ezrabrown}  and the proof of    \cite[Theorem 1]{ezrabrown}, one deduces easily that    $h_2(-p)=4$ if and only if $\genfrac(){}{0}{2}{p}_4 \not=\genfrac(){}{0}{p}{2}_4$.

 \begin{enumerate}[\rm $\bullet$]
 	\item   Suppose that  $d$ takes the  form  \eqref{ frm i}. So $8$ divides $h_2(-p)$. Note also that,    by  \cite[p. 596]{kaplan1973divisibilitepar8}, $h_2(-2p)$ is divisible by $4$,   and by
 	\cite[Theorem 2]{kuvcera1995parity} $h_2( L_{p}^+)$ is even. It follows  by the equality  \eqref{eq 3}   that  $16$ divides $h_2(L_p)$.
 	So this case is eliminated\\

 	\item  Suppose that  $d$ takes the  form \eqref{ frm ii}. Thus $\genfrac(){}{0}{2}{p}_4=1$ and $\genfrac(){}{0}{p}{2}_4=-1$. So, by  \cite[Theorem 2]{kuvcera1995parity}, $h_2( L_{p}^+)=1$.
 	 Therefore, by the  equality \eqref{eq 3} and  the note below it, we have $h_2(L_p)=h_2(-2p)$.

 	  Keep the notations of \cite[p. 601]{kaplan1973divisibilitepar8} and let $p=a^2+b^2=2e^2-d^2$, with $e\geq 1$.
 	 By using notations in the proof of   \cite[Theorem 1]{ezrabrown} and \cite[p. 601]{kaplan1973divisibilitepar8} we easily deduce that $1=\genfrac(){}{0}{2}{p}_4=(-1)^{b/4}$, so $b\equiv 0\pmod 8$. Therefore, by  \cite[Théroème 3]{kaplan1973divisibilitepar8}, $h_2(-2p)\equiv0\pmod 8$.
 	  It follows, by  \cite[Theorem 2]{leonard1982divisibilityby16}, $h_2(-2p)=8$ if and only if $\genfrac(){}{0}{u}{p}_4=-1$. So  the first item of our theorem.
 	\item  Suppose that	$d$ takes the third form  \eqref{ frm iii}. Without loss of generality we may assume that  $\genfrac(){}{0}{q_1}{q_2}=1$.
  By Proposition \ref{wada's f.} we have
 		\begin{eqnarray*}
 		h(L_{d})&=&\frac{1}{2^5}q(L_{d})h( q_1q_2) h(- q_1q_2)h(2q_1q_2)h(-2q_1q_2)h(2)h(-2)h(-1). 		
 	\end{eqnarray*}
 	By \cite[Corollary 18.4]{connor88} $h_2( q_1q_2)=1$ and by
 		 \cite[pp. 345, 354]{kaplan76} we have $h_2(2q_1q_2)=2$ and  $h_2(-q_1q_2)=4$ respectively.
 		 It is known that $ h(2)=h(-2)=h(-1) =1$. Since by Lemma \ref{lm on unit undex} $q(L_{d})=4$,
 		  then by  passing to the $2$-part in the above equality we get
 		\begin{eqnarray*}
 		h_2(L_{d})=\frac{1}{2^5}.4.1. 4.2.h_2(-2q_1q_2)
 		= h_2(-2q_1q_2).
 	\end{eqnarray*}
 	Therefore, by  \cite[p. 357]{kaplan76} and  Lemma \ref{lm 2-rank = 2}, $\mathrm{Cl}_2(L_{d})=(2, 4)$ if and only if $h_2(-2q_1q_2)=8$ which is  equivalent to  $ \genfrac(){}{0}{-2}{\left|k^2X+lY\right|}=-1$. This achieves the proof.
 	 \end{enumerate}		
 \end{proof}

Let $\ell$ be a positive integer. For a finite abelian group $G$, its $2^\ell$-rank is defined  as $r_{2^\ell}(G)=\mathrm{dim}_{\mathbb{F}_2}(2^{\ell-1}G/2^{\ell}G)$.
Or equivalently looking at the decomposition of the group $G$ into cyclic groups as $G=\prod_{i} C_{n_i}$, the $2^\ell$-rank of $G$ equals the number of $n_i$'s divisible by $2^\ell$.

 \begin{corollary}
 	Let $p$ be a rational prime such that $p\equiv 1\pmod8$ and $\genfrac(){}{0}{2}{p}_4=\genfrac(){}{0}{p}{2}_4=-1$, then the $8$-rank of  $\mathrm{Cl}_2(L_p)$ equals $1$.
 \end{corollary}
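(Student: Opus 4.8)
The idea is to bound the two invariant factors of $\mathrm{Cl}_2(L_p)$ separately: the larger one from below, via a surjection onto the cyclic $2$-class group of $\mathbb{Q}(\sqrt{-p})$, and the smaller one from above, via the class number formula~\eqref{eq 3}.

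Since $\left(\frac{p}{2}\right)_{4}=-1$ is equivalent to $p\equiv 9\pmod{16}$, the hypothesis places $d=p$ in item~5 of Lemma~\ref{lm 2-rank = 2}, whence $r_{2}(\mathrm{Cl}_2(L_p))=2$; write $\mathrm{Cl}_2(L_p)\simeq\mathbb{Z}/2^{\alpha}\mathbb{Z}\times\mathbb{Z}/2^{\beta}\mathbb{Z}$ with $1\le\alpha\le\beta$. By the description of the $2^{\ell}$-rank recalled just before the statement, $r_{8}(\mathrm{Cl}_2(L_p))=\#\{x\in\{\alpha,\beta\}:x\ge 3\}$, so it suffices to prove $\beta\ge 3$ and $\alpha\le 2$.

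First I would show $\beta\ge 3$. The subfield $\mathbb{Q}(\sqrt{-p})=\mathbb{Q}(i\sqrt p)\subset L_p$ has exactly two ramified primes, so $\mathrm{Cl}_2(\mathbb{Q}(\sqrt{-p}))$ is cyclic; moreover $4\mid h_2(-p)$ since $p\equiv 1\pmod 8$, and $h_2(-p)\neq 4$ by the note after~\eqref{eq 3} (here $\left(\frac{2}{p}\right)_{4}=\left(\frac{p}{2}\right)_{4}$), so $\mathrm{Cl}_2(\mathbb{Q}(\sqrt{-p}))$ is cyclic of order $\ge 8$. Each of the three quadratic fields between $\mathbb{Q}(\sqrt{-p})$ and $L_p$ is ramified at $2$, so no subfield strictly between $\mathbb{Q}(\sqrt{-p})$ and $L_p$ (nor $L_p$ itself) is unramified over $\mathbb{Q}(\sqrt{-p})$; hence the norm map $\mathrm{Cl}_2(L_p)\to\mathrm{Cl}_2(\mathbb{Q}(\sqrt{-p}))$ is surjective. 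Thus $\mathrm{Cl}_2(L_p)$ has a cyclic quotient of order $\ge 8$, forcing $\beta\ge 3$.

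Next, $\alpha\le 2$. Here I would use~\eqref{eq 3}, $h_2(L_p)=\frac{1}{4}h_2(L_p^{+})h_2(-p)h_2(-2p)$, in case~(I) of the proof of Theorem~\ref{thm (2,4)}. There $\mathbb{Q}(\sqrt{-2p})$ again has cyclic $2$-class group, and repeating the case-(II) computation with $\left(\frac{2}{p}\right)_{4}=-1$ (so that $b\equiv 4\pmod 8$ for $p=a^{2}+b^{2}$, via $\left(\frac{2}{p}\right)_{4}=(-1)^{b/4}$) gives $h_2(-2p)=4$ by Kaplan's divisibility criteria, whence $h_2(L_p)=h_2(L_p^{+})h_2(-p)$. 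Now $\mathrm{Cl}_2(L_p)$ has exponent $2^{\beta}$ and surjects onto the cyclic group $\mathrm{Cl}_2(\mathbb{Q}(\sqrt{-p}))$ of order $h_2(-p)$, so $h_2(-p)\mid 2^{\beta}$, and therefore $2^{\alpha}=h_2(L_p)/2^{\beta}$ divides $h_2(L_p)/h_2(-p)=h_2(L_p^{+})$. It thus suffices to show $h_2(L_p^{+})\le 4$, and in fact I expect $h_2(\mathbb{Q}(\sqrt{2},\sqrt{p}))=2$ in case~(I). This last point is the main obstacle: Ku\v{c}era's theorem only gives the parity of $h_2(\mathbb{Q}(\sqrt{2},\sqrt{p}))$, and pinning down its exact value (together with the cyclicity of $\mathrm{Cl}_2(\mathbb{Q}(\sqrt{2},\sqrt{p}))$) requires a genus-theoretic analysis of this real biquadratic field. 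Granting $h_2(L_p^{+})\le 4$ one gets $\alpha\le 2$, and with $\beta\ge 3$ this yields $r_{8}(\mathrm{Cl}_2(L_p))=1$. One could also bypass this by computing $r_{4}(\mathrm{Cl}_2(L_p))$ directly from a $4$-rank formula in terms of rational residue symbols among $2$ and $p$, in the spirit of \cite{ACZ}, which would give $\alpha=1$.
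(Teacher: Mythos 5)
Your argument splits into a lower bound $\beta\ge 3$ and an upper bound $\alpha\le 2$ for the two invariant factors of $\mathrm{Cl}_2(L_p)$, and the second half has a genuine gap that you yourself flag. The paper's proof is short precisely because it imports the one fact you are missing: by \cite[Th\'eor\`eme 10]{taous2008} the $4$-rank of $\mathrm{Cl}_2(L_p)$ equals $1$ under these hypotheses, which (with $2$-rank $2$) immediately gives $\alpha=1$; combined with $16\mid h_2(L_p)$, already established in case (I) of the proof of Theorem \ref{thm (2,4)}, this forces $\beta\ge 3$ and hence $8$-rank $1$. Your substitute route to $\alpha\le 2$ --- namely $2^{\alpha}\mid h_2(L_p^{+})$ together with $h_2(L_p^{+})\le 4$ --- is not completed: Ku\v{c}era's theorem only gives the parity of $h_2(\mathbb{Q}(\sqrt{2},\sqrt{p}))$ and you supply no upper bound for it; moreover the exact value $h_2(-2p)=4$ is asserted by analogy with case (II) of Theorem \ref{thm (2,4)} rather than derived from Kaplan's criterion. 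Without one of these inputs the possibility $\alpha\ge 3$, i.e.\ $8$-rank equal to $2$, is not excluded, so the proof as written does not close.

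That said, your lower-bound argument is correct and is a genuinely different, more structural route than the paper's: the norm map $\mathrm{Cl}_2(L_p)\to\mathrm{Cl}_2(\mathbb{Q}(\sqrt{-p}))$ is surjective because every intermediate field of $L_p/\mathbb{Q}(\sqrt{-p})$ is ramified above $2$, and $\mathrm{Cl}_2(\mathbb{Q}(\sqrt{-p}))$ is cyclic (genus theory) of order at least $8$ (Brown's criterion, since $\left(\frac{2}{p}\right)_4=\left(\frac{p}{2}\right)_4$); hence the exponent of $\mathrm{Cl}_2(L_p)$ is at least $8$ and $\beta\ge 3$, independently of any class number computation. To finish, follow your own closing remark: establish $r_4(\mathrm{Cl}_2(L_p))=1$, which is exactly what the paper cites \cite[Th\'eor\`eme 10]{taous2008} for; that yields $\alpha=1$ and completes the proof.
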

 \begin{proof}
 	By \cite[Théorème 10]{taous2008}, we deduce that the $4$-rank of $\mathrm{Cl}_2(L_p)$ equals $1$,  and by the proof of  Theorem \ref{thm (2,4)}, $h_2(L_p)$ is divisible by $16$, so the result.
 \end{proof}
From the proof of Theorem \ref{thm (2,4)} we get
 \begin{corollary}
 	Let $q_1$ and $q_2$ be two rational primes such that $q_1\equiv q_2\equiv 3\pmod 8$, then $h_2(L_{q_1q_2})=h_2(-2q_1q_2)$.
 \end{corollary}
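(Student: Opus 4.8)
The plan is to observe that the desired identity $h_2(L_{q_1q_2})=h_2(-2q_1q_2)$ has, after a harmless relabelling of $q_1$ and $q_2$, already been established inside case (III) of the proof of Theorem~\ref{thm (2,4)}; the only thing to verify is that the computation carried out there uses nothing beyond the hypothesis $q_1\equiv q_2\equiv 3\pmod 8$.

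First I would exploit symmetry. Both $L_{q_1q_2}=\mathbb{Q}(\zeta_8,\sqrt{q_1q_2})$ and the integer $-2q_1q_2$ depend only on the product $q_1q_2$, so the assertion is unchanged if we interchange $q_1$ and $q_2$. Since $q_1\equiv q_2\equiv 3\pmod 4$, quadratic reciprocity gives $\left(\frac{q_1}{q_2}\right)\left(\frac{q_2}{q_1}\right)=-1$, so exactly one of the two symbols equals $1$; after relabelling we may assume $\left(\frac{q_1}{q_2}\right)=1$, which is precisely the situation in which Lemma~\ref{lm on unit undex} applies and yields $q(L_{q_1q_2})=4$.

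Next I would simply rerun the computation of case (III) of Theorem~\ref{thm (2,4)}. Applying Proposition~\ref{wada's f.} to the imaginary octic multiquadratic field $L_{q_1q_2}$, passing to $2$-parts, and inserting $h_2(q_1q_2)=1$ by \cite[Corollary 18.4]{connor88}, $h_2(2q_1q_2)=2$ and $h_2(-q_1q_2)=4$ by \cite[pp.~345, 354]{kaplan76}, $h_2(2)=h_2(-2)=h_2(-1)=1$, together with $q(L_{q_1q_2})=4$, all the factors cancel and one is left with $h_2(L_{q_1q_2})=\frac{1}{2^5}\cdot 4\cdot 1\cdot 4\cdot 2\cdot h_2(-2q_1q_2)=h_2(-2q_1q_2)$.

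I expect no genuine obstacle here; the only point requiring care is to confirm that each ingredient above — Proposition~\ref{wada's f.}, the Connor--Hurrelbrink and Kaplan values of the $2$-class numbers of the quadratic subfields, and the unit index supplied by Lemma~\ref{lm on unit undex} — is valid for all primes $q_1\equiv q_2\equiv 3\pmod 8$ once the Legendre normalization $\left(\frac{q_1}{q_2}\right)=1$ is in force, and that none of them requires the extra residue condition $\left(\frac{-2}{|k^2X+lY|}\right)=-1$ that enters the statement of Theorem~\ref{thm (2,4)}. Since this is a straightforward inspection of the ingredients already in place, the equality follows for every such pair $q_1,q_2$.
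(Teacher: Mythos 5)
Your proposal is correct and follows the paper's own route: the paper derives this corollary verbatim from case (III) of the proof of Theorem~\ref{thm (2,4)}, including the same quadratic-reciprocity relabelling to force $\left(\frac{q_1}{q_2}\right)=1$ so that Lemma~\ref{lm on unit undex} gives $q(L_{q_1q_2})=4$, followed by the identical application of Proposition~\ref{wada's f.} with the Kaplan and Conner--Hurrelbrink values. Your added remark that the extra condition $\left(\frac{-2}{|k^2X+lY|}\right)=-1$ plays no role here is a correct and worthwhile sanity check, but the argument is otherwise the same.
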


\begin{example} For all the examples below, we used PARI/GP calculator version 2.9.4 (64bit), Dec 20, 2017.
	\begin{enumerate}[\rm 1.]
		\item For $p=89$, $u=17$ and $v=10$, we have $p=u^2-2v^2$,  $\genfrac(){}{0}{2}{p}_4 =-\genfrac(){}{0}{u}{p}_4=1$ and $\mathrm{Cl}_2(L_{89})  \simeq(2, 4)$.
		\item For $q_1=11$, $q_2=19$, $k=1$, $l=3$, $m=-1$, $X=4$ and  $Y=1$, we have  $q_1=l^2-2k^2m$, $2q_2=k^2X^2+2lXY+2mY^2$,
		 $\genfrac(){}{0}{-2}{\left|k^2X+lY\right|} =\genfrac(){}{0}{-2}{7}=-1$ and $\mathrm{Cl}_2(L_{11.19})  \simeq(2, 4)$.
	\end{enumerate}	
\end{example}

 \section{Fields $L_d$ for which $\mathrm{Cl}_2(L_d)$ is of type $(2, 2, 2)$}
 In this section we determine all   fields $L_d$ such that $\mathrm{Cl}_2(L_d)\simeq(2, 2, 2)$. Keep the above notations: $p$, $p_i$, $q$ and $q_i$ are prime   integers satisfying  $p\equiv p_i\equiv1\pmod4$ and $q\equiv q_i\equiv3\pmod4$ with $i\in\mathds{N}^*$.
 From  section 4 of  \cite{ACZ}, it is easy to deduce the following result:
 \begin{lm}\label{lm section 2 lm 1} The rank of the $2$-class group   $\mathrm{Cl}_2(L_d)$ of $L_{d}$ equals $3$  if and only if $d$  takes one of the following forms.
 	\begin{enumerate}[\rm1.]
 		\item $d =p$ with   $ p \equiv1\pmod8$ and   $\genfrac(){}{0}{2}{p}_4= \genfrac(){}{0}{p}{2}_4=1$.
 			\vspace{0.05cm}\item $d=q_1q_2$ with   $q_1\equiv q_2 \equiv7\pmod8$.
 		\item $d=qp$ with   $ q \equiv3\pmod8$  and $p\equiv1\pmod 8$ and $ \genfrac(){}{0}{2}{p}_4=-1$.
 	\vspace{0.07cm}	\item $d=p_1p_2$ with   $ p_1\equiv5\pmod8 $,   $ p_2\equiv1\pmod8$ and $ \genfrac(){}{0}{2}{p_2}_4\neq \genfrac(){}{0}{p_2}{2}_4$.
 		\item $d=q_1q_2p$ with   $ q_1 \equiv q_2\equiv3\pmod8$  and $p\equiv5\pmod{8}.$
 		\item $d=qp_1p_2$ with   $ q \equiv3\pmod8$  and $p_1\equiv p_2\equiv5\pmod{8}.$
 	\end{enumerate}
 \end{lm}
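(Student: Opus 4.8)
The plan is to read off the answer from the complete determination of the $2$-rank $r_2(\mathrm{Cl}_2(L_d))$ carried out in Section~4 of \cite{ACZ}, exactly as Lemma~\ref{lm 2-rank = 2} (which is \cite[Theorem 5.6]{ACZ}) records the rank-$2$ case. In \cite{ACZ} one writes $d$ as a product of primes grouped by their residues $1,5,3,7\pmod 8$, and the $2$-rank of $\mathrm{Cl}_2(L_d)$ is expressed through a genus-theoretic (Rédei-type) count governed by the splitting of the primes dividing $d$ in the quadratic subfields $\mathbb{Q}(i)$, $\mathbb{Q}(\sqrt2)$, $\mathbb{Q}(\sqrt{-2})$ of $\mathbb{Q}(\zeta_8)$, together with the rational biquadratic residue symbols $\genfrac(){}{0}{2}{p}_4$ and $\genfrac(){}{0}{p}{2}_4$ attached to the prime factors $p\equiv 1\pmod 8$. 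The key input is that \cite[Section 4]{ACZ} already gives the exact value of this rank in every configuration; our task is only the bookkeeping of selecting the configurations for which it equals $3$.

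First I would recall the coarse bound coming from genus theory for the quadratic extension $L_d/\mathbb{Q}(\zeta_8)$, which is ramified precisely at the primes dividing $d$: this bounds $r_2(\mathrm{Cl}_2(L_d))$ linearly in the number $t$ of prime divisors of $d$, so that $r_2=3$ already forces $t\le 3$ and, together with the residue constraints coming from the genus computation, leaves only finitely many shapes of $d$ to test. Then I would go shape by shape through those candidates --- single primes $p\equiv1\pmod8$; products of two primes of each admissible residue pattern; products of three primes --- and for each one invoke the relevant statement of \cite[Section 4]{ACZ} to read off the rank, discarding the shapes that give rank $\le 2$ (these are precisely the ones in Lemma~\ref{lm 2-rank = 2}) or rank $\ge 4$, and keeping the six families listed: $d=p$ with $\genfrac(){}{0}{2}{p}_4=\genfrac(){}{0}{p}{2}_4=1$; $d=q_1q_2$ with $q_i\equiv7\pmod8$; $d=qp$ with $q\equiv3$, $p\equiv1\pmod8$ and $\genfrac(){}{0}{2}{p}_4=-1$; $d=p_1p_2$ with $p_1\equiv5$, $p_2\equiv1\pmod8$ and $\genfrac(){}{0}{2}{p_2}_4\neq\genfrac(){}{0}{p_2}{2}_4$; $d=q_1q_2p$ with $q_i\equiv3$, $p\equiv5\pmod8$; and $d=qp_1p_2$ with $q\equiv3$, $p_i\equiv5\pmod8$. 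In each case the verification reduces to checking that the Rédei/genus matrix of \cite[Section 4]{ACZ} has corank exactly $3$.

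I expect the only real difficulty to be organizational: making the case division exhaustive and transcribing without slip the biquadratic-residue conditions that, for a fixed set of prime divisors, separate rank $3$ from the adjacent ranks $2$ and $4$; the boundary cases $t=2$ and $t=3$ require the most care, since there the rank depends delicately on these symbols rather than merely on $t$. No genuinely new computation is involved --- the statement is a direct consequence of the results of \cite[Section 4]{ACZ}, as the phrase ``it is easy to deduce'' signals.
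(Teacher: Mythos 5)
Your proposal matches the paper exactly: the paper offers no argument beyond the single sentence that the lemma ``is easy to deduce'' from Section~4 of \cite{ACZ}, i.e.\ it reads the rank-$3$ configurations off the complete $2$-rank determination in that reference, just as you describe. Your additional remarks on the genus-theoretic bound limiting the number of prime divisors and on the case-by-case bookkeeping are a reasonable (and slightly more explicit) fleshing-out of the same approach, and raise no concerns.
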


We will need the following lemmas.
 \begin{lm}\label{lm2 on unit undex}
	Let $d=q_1q_2$, with   $q_1\equiv q_2 \equiv7\pmod8$ are two primes such that $\left(\dfrac{	q_1}{	q_2}\right)=1$. Then we have $E_{L_d}=\langle \zeta_{8},\varepsilon_{  2},  \varepsilon_{   q_1q_2} ,\sqrt{\varepsilon_{   2q_1q_2}} \text{ or } \sqrt{\varepsilon_{   q_1q_2}\varepsilon_{   2q_1q_2}}\rangle$, and thus $q(L_d)=4$.
\end{lm}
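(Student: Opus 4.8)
The plan is to follow, \emph{mutatis mutandis}, the proof of Lemma~\ref{lm on unit undex}: first determine a fundamental system of units (FSU) of the real biquadratic field $L_d^+=\mathbb{Q}(\sqrt{2},\sqrt{q_1q_2})$, then lift it to the CM-field $L_d$. Since $q_1q_2\equiv1\pmod 8$, the same reasoning as in Lemma~\ref{lm on unit undex} shows that $\varepsilon_{q_1q_2}=a+b\sqrt{q_1q_2}$ with $a,b\in\mathbb{Z}$ (a half-integer expression would force $q_1q_2\equiv5\pmod 8$), and $N(\varepsilon_{q_1q_2})=1$ because $q_1\equiv3\pmod 4$ divides $q_1q_2$. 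Writing $a^2-1=b^2q_1q_2$ with $b$ even and $\gcd(a-1,a+1)=2$, unique factorization in $\mathbb{Z}$, Lemma~\ref{3:105}, and the fact that the fundamental unit $\varepsilon_{q_1q_2}$ is not a square in $\mathbb{Q}(\sqrt{q_1q_2})$ leave only $\{a-1,a+1\}=\{2q_1b_1^2,\,2q_2b_2^2\}$. The hypothesis $\left(\frac{q_1}{q_2}\right)=1$, quadratic reciprocity (both $q_i\equiv3\pmod 4$) and $\left(\frac{-1}{q_2}\right)=-1$ then rule out the system $a-1=2q_1b_1^2,\ a+1=2q_2b_2^2$, exactly as in Lemma~\ref{lm on unit undex} (this particular step does not feel the difference between $3$ and $7$ modulo $8$). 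Hence $a+1=2q_1b_1^2$, $a-1=2q_2b_2^2$, so $\sqrt{\varepsilon_{q_1q_2}}=b_1\sqrt{q_1}+b_2\sqrt{q_2}$; as $\sqrt{q_1}\notin L_d$, the unit $\varepsilon_{q_1q_2}$ is not a square in $L_d^+$, nor in $L_d$.

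The genuinely new point is the analysis of $\varepsilon_{2q_1q_2}=x+y\sqrt{2q_1q_2}$, which again has $x,y\in\mathbb{Z}$ and $N(\varepsilon_{2q_1q_2})=1$. From $x^2-1=2q_1q_2y^2$ one finds that $y$ is even and exactly one of $x\pm1$ is divisible by $4$; discarding (by fundamentality of $\varepsilon_{2q_1q_2}$) the factorization that would make $\varepsilon_{2q_1q_2}$ a square in $\mathbb{Q}(\sqrt{2q_1q_2})$, precisely two cases remain. In case \textbf{(A)} one of $x\pm1$ is a perfect square; then $2\varepsilon_{2q_1q_2}$ is a square in $\mathbb{Q}(\sqrt{2q_1q_2})$, hence $\varepsilon_{2q_1q_2}$ is a square in $L_d^+$ since $\sqrt{2}\in L_d^+$, and $\sqrt{\varepsilon_{2q_1q_2}}\in L_d^+$. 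In case \textbf{(B)} one has $\{x-1,x+1\}=\{2q_ib^2,\,4q_jc^2\}$ with $\{i,j\}=\{1,2\}$; then $\sqrt{\varepsilon_{2q_1q_2}}=b\sqrt{q_i}+c\sqrt{2q_j}$, which does not lie in $L_d^+$, but its product with $\sqrt{\varepsilon_{q_1q_2}}=b_1\sqrt{q_1}+b_2\sqrt{q_2}$ from the first step has all its coordinates in $\mathbb{Q}(\sqrt{2},\sqrt{q_1q_2})=L_d^+$, so $\sqrt{\varepsilon_{q_1q_2}\varepsilon_{2q_1q_2}}\in L_d^+$. Thus in both cases exactly one of $\varepsilon_{2q_1q_2}$, $\varepsilon_{q_1q_2}\varepsilon_{2q_1q_2}$ is a square in $L_d^+$ (both cannot be, else $\varepsilon_{q_1q_2}$ would be one).

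It now follows from Wada's algorithm (\cite{wada}, or \cite[p.~113]{azizi99unitedeg8}) that $\{\varepsilon_2,\varepsilon_{q_1q_2},\sqrt{\eta}\}$ is a FSU of $L_d^+$, where $\eta\in\{\varepsilon_{2q_1q_2},\,\varepsilon_{q_1q_2}\varepsilon_{2q_1q_2}\}$ is the square found above. Indeed, among the seven candidates built from $\varepsilon_2,\varepsilon_{q_1q_2},\varepsilon_{2q_1q_2}$, every one involving $\varepsilon_2$ under a square root is excluded because $N(\varepsilon_2)=-1$: if $\varepsilon_2\nu=\mu^2$ in $L_d^+$ with $\nu\in\langle\varepsilon_{q_1q_2},\varepsilon_{2q_1q_2}\rangle$, applying the automorphism $\sigma$ of $L_d^+$ fixing $\sqrt{q_1q_2}$ and sending $\sqrt2$ to $-\sqrt2$, and multiplying the two relations, yields $\mu\sigma(\mu)=\pm i\,\nu$, which is absurd since $\mu\sigma(\mu)$ is totally real; and $\varepsilon_{q_1q_2}$ is not a square by the first paragraph. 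In particular $q(L_d^+)=2$.

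To pass to $L_d$, note that $L_d/L_d^+$ is a CM-extension with $W_{L_d}=\langle\zeta_8\rangle$; by \cite[Proposition~2]{azizi99unite} it remains only to check that $E_{L_d}=W_{L_d}E_{L_d^+}$, i.e.\ that no unit $\zeta_8\,\varepsilon_2^{a}\varepsilon_{q_1q_2}^{b}(\sqrt{\eta})^{c}$ is a square in $L_d$; this is handled exactly as in \cite[Proposition~2]{azizi99unite}, and grants $E_{L_d}=\langle\zeta_8,\varepsilon_2,\varepsilon_{q_1q_2},\sqrt{\eta}\rangle$. Finally, since $\mathbb{Q}(i)$, $\mathbb{Q}(\sqrt{-2})$, $\mathbb{Q}(\sqrt{-q_1q_2})$, $\mathbb{Q}(\sqrt{-2q_1q_2})$ contribute only roots of unity, one has $\prod_i E_{k_i}=\langle i,\varepsilon_2,\varepsilon_{q_1q_2},\varepsilon_{2q_1q_2}\rangle$, and comparing this with $\langle\zeta_8,\varepsilon_2,\varepsilon_{q_1q_2},\sqrt{\eta}\rangle$ produces one factor $[\langle\zeta_8\rangle:\langle i\rangle]=2$ and one factor $2$ coming from $\sqrt{\eta}$ (these are independent in the quotient by comparing exponents of $\varepsilon_{q_1q_2}$ and using that $\zeta_8$ is not a product of roots of unity and real units), whence $q(L_d)=4$. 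The main obstacle is the case distinction \textbf{(A)}/\textbf{(B)} of the second paragraph and the attendant manipulations of nested square roots, which is where the case $q_i\equiv7\pmod8$ genuinely departs from $q_i\equiv3\pmod8$; the remaining steps run in close parallel with Lemma~\ref{lm on unit undex}.
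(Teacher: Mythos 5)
Your proof is correct and follows exactly the route the paper intends: the published proof of this lemma consists of the single line ``Similar to the proof of Lemma \ref{lm on unit undex}'', and your write-up is precisely that adaptation carried out in full. In particular you correctly isolate the one point where $q_i\equiv7\pmod 8$ genuinely differs from $q_i\equiv3\pmod 8$, namely that the congruence obstructions which there force $\varepsilon_{2q_1q_2}$ to become a square in $L_d^+$ no longer exclude the mixed factorization $\{x-1,x+1\}=\{2q_ib^2,\,4q_jc^2\}$, which is exactly why the statement carries the alternative generator $\sqrt{\varepsilon_{q_1q_2}\varepsilon_{2q_1q_2}}$.
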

\begin{proof}
Similar to the proof of Lemma \ref{lm on unit undex}.
\end{proof}

 \begin{lm}\label{lm section 2 lm 2}
 	\begin{enumerate}[\rm 1.]
 		\item Let	$d =p$, for a prime   $ p \equiv1\pmod8$ such that   $\genfrac(){}{0}{2}{p}_4= \genfrac(){}{0}{p}{2}_4=1$. Then, $h_2(L_p)\equiv 0\pmod{16}$, and thus  $\mathrm{Cl}_2(L_p)$ is not elementary.
 		\item 		Let $d=q_1q_2$ with $q_1$ and $q_2$ are two rational primes such that $q_1\equiv q_2\equiv 7\pmod 8$. Then, $h_2(L_{q_1q_2})\equiv 0\pmod{32}$, and thus $\mathrm{Cl}_2(L_{q_1q_2})$ is not elementary.
 		\item Let $d=p_1p_2$ with   $ p_1\equiv5\pmod8 $,    $ p_2\equiv1\pmod8$ are two primes such that  
 		$\genfrac(){}{0}{p_2}{p_1}  = 1$. Then, $h_2(L_{p_1p_2})\equiv 0\pmod{32}$.
 	\end{enumerate}
 \end{lm}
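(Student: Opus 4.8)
The plan is to treat each of the three items by the same strategy already used for Lemma \ref{lm h2 is divisible by 16 in 5mod8}: apply the Herglotz--Kuroda formula (Proposition \ref{wada's f.}) to the imaginary multiquadratic field $L_d$ of degree $2^3$, pass to the $2$-part, bound from below the unit index $q(L_d)$ by exhibiting $\zeta_8\notin\prod_i E_{k_i}$, and then insert known $2$-divisibility results of Kaplan, R\'edei--Reichardt, Connor--Hurrelbrink, Brown, etc., for the $2$-class numbers of the seven quadratic subfields.

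First, in each case I would write down explicitly the seven quadratic subfields of $L_d$ and the corresponding instance of Proposition \ref{wada's f.}. For item (1), $d=p\equiv1\pmod 8$, $L_p=\mathbb Q(\sqrt p,\sqrt2,i)$, so
\begin{eqnarray*}
h(L_p)=\tfrac{1}{2^5}q(L_p)\,h(p)h(-p)h(2p)h(-2p)h(2)h(-2)h(-1),
\end{eqnarray*}
and, using $h_2(2)=h_2(-2)=h_2(-1)=1$ together with $q(L_p)\in 2^{\NN}$ (by \cite[Theorem 2.2]{azt2014structure}), the $2$-part reads $h_2(L_p)=\tfrac{1}{2^3}q(L_p)h_2(p)h_2(-p)h_2(2p)h_2(-2p)$. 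The hypothesis $\genfrac(){}{0}{2}{p}_4=\genfrac(){}{0}{p}{2}_4=1$ forces, via \cite{ezrabrown} and \cite{kaplan1973divisibilitepar8}, that $8\mid h_2(-p)$ (since $h_2(-p)=4$ iff the two quartic symbols differ) and $8\mid h_2(-2p)$, while \cite[Corollary 18.4]{connor88} gives $2\mid h_2(p)$ and $2\mid h_2(2p)$; the coset argument $\zeta_8\notin\prod_i E_{k_i}$ gives $2\mid q(L_p)$. Multiplying, $h_2(L_p)$ is divisible by $\tfrac{1}{2^3}\cdot 2\cdot 2\cdot 8\cdot 2\cdot 8=2^7/2^3=2^4=16$. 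Since $\mathrm{Cl}_2(L_p)$ has rank $3$ by Lemma \ref{lm section 2 lm 1}, an elementary group would have order $8$; as $16\mid h_2(L_p)$ it is not elementary. For item (2), $d=q_1q_2$ with $q_i\equiv7\pmod 8$: here the relevant subfields include $\mathbb Q(\sqrt{-q_1q_2})$, $\mathbb Q(\sqrt{2q_1q_2})$, $\mathbb Q(\sqrt{-2q_1q_2})$, and I would use $q(L_d)=4$ from Lemma \ref{lm2 on unit undex}, together with Kaplan's tables \cite{kaplan76} ($h_2(q_1q_2)$, $h_2(\pm2q_1q_2)$, $h_2(-q_1q_2)$ suitably divisible — note $q_1q_2\equiv1\pmod 8$ and $\left(\tfrac{q_1}{q_2}\right)=1$ push these class numbers higher) to obtain $32\mid h_2(L_{q_1q_2})$; rank $3$ then rules out elementarity. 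For item (3), $d=p_1p_2$ with $p_1\equiv5$, $p_2\equiv1\pmod 8$ and $\genfrac(){}{0}{p_2}{p_1}=1$: the same machinery with the subfields $\mathbb Q(\sqrt{p_1p_2})$, $\mathbb Q(\sqrt{-p_1p_2})$, $\mathbb Q(\sqrt{\pm2p_1p_2})$, $\mathbb Q(\sqrt{-1})$, $\mathbb Q(\sqrt{\pm2})$, the bound $2\mid q(L_d)$ from $\zeta_8\notin\prod_i E_{k_i}$, and the extra $2$-divisibility coming from the condition $\genfrac(){}{0}{p_2}{p_1}=1$ (via \cite{connor88} and \cite{kaplan76}, where this Legendre condition boosts $h_2$ of the relevant real and imaginary fields by an extra factor of $2$) should yield $32\mid h_2(L_{p_1p_2})$.

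The main obstacle I anticipate is item (3): unlike items (1) and (2), the lemma does not assert that $\mathrm{Cl}_2$ is non-elementary (it only claims $32\mid h_2$), which means I cannot lean on a rank computation to finish, and — more importantly — the two base primes lie in different residue classes mod $8$, so the quadratic subfields $\mathbb Q(\sqrt{\pm p_1p_2})$ and $\mathbb Q(\sqrt{\pm 2p_1p_2})$ have $2$-class numbers governed by a mixture of Kaplan's criteria whose exact $2$-adic valuations depend delicately on $\genfrac(){}{0}{p_2}{p_1}$, $\genfrac(){}{0}{p_1}{p_2}$, and the quartic symbols at $p_2$. Pinning down that the product of these valuations, after dividing by $2^3$ and multiplying by $q(L_d)\geq 2$, is at least $2^5$ is the combinatorial heart of the argument; I would organize it as a short table of the guaranteed powers of $2$ dividing each of $h_2(p_1p_2),\,h_2(-p_1p_2),\,h_2(2p_1p_2),\,h_2(-2p_1p_2)$ under the stated hypothesis, and I expect the condition $\genfrac(){}{0}{p_2}{p_1}=1$ to be exactly what is needed to nudge one of these up from $2$ to $4$. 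For items (1) and (2), by contrast, the computation is essentially routine once the unit-index lower bound and the quartic-symbol-driven divisibilities of $h_2(-p)$, $h_2(-2p)$ (resp. $h_2(-q_1q_2)$, $h_2(\pm 2q_1q_2)$) are in hand, and the non-elementarity conclusion is then immediate from Lemma \ref{lm section 2 lm 1}.
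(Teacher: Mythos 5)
Your plan for items 2 and 3 --- Proposition \ref{wada's f.} applied to the degree-$2^3$ imaginary field, a lower bound on $q(L_d)$, and Kaplan/Conner--Hurrelbrink divisibilities for the seven quadratic subfields --- is the route the paper takes for those two items (for item 2 it computes $\frac{1}{2^5}\cdot 4\cdot 1\cdot 8\cdot 4\cdot 8=32$, using $q(L_{q_1q_2})=4$ from Lemma \ref{lm2 on unit undex} rather than the weaker bound $2\mid q$; for item 3 it just says ``proceed as in item 2''). You leave the actual divisibility bookkeeping for these two items unverified, and you candidly flag item 3 as not done, so there the issue is one of execution rather than of strategy.

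The genuine gap is in item 1, where your computation contains concrete errors and, once corrected, falls short. First, the constant in Proposition \ref{wada's f.} for an imaginary multiquadratic field of degree $2^3$ is $2^5$, not $2^3$: the $2^3$ appearing in the proof of Lemma \ref{lm h2 is divisible by 16  in 5mod8} arises only after the value $h_2(-2p_1p_2)=4$ has been absorbed into the constant. Second, $h_2(p)=1$ for a prime $p$ (only one ramified prime, so the narrow class number of $\mathbb{Q}(\sqrt p)$ is odd); Corollary 18.4 of \cite{connor88} concerns $d=p_1p_2$, not $d=p$, so your claim $2\mid h_2(p)$ is false. (There is also an arithmetic slip: $2\cdot2\cdot8\cdot2\cdot8=2^9$, not $2^7$.) With the inputs you can actually justify, namely $2\mid q(L_p)$, $h_2(p)=1$, $8\mid h_2(-p)$, $2\mid h_2(2p)$ and $8\mid h_2(-2p)$, the product divided by $2^5$ yields only $8\mid h_2(L_p)$ --- one factor of $2$ short of the assertion. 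The paper avoids this by a different route: it reuses equality \eqref{eq 3}, i.e.\ the CM class number formula over $\mathbb{Q}(\sqrt2)$ giving $h_2(L_p)=\frac14\, h_2(L_p^+)\,h_2(-p)\,h_2(-2p)$, and invokes Ku\v{c}era's parity theorem \cite{kuvcera1995parity} to get $2\mid h_2(L_p^+)$; the evenness of the class number of the real biquadratic field $\mathbb{Q}(\sqrt2,\sqrt p)$ is precisely the extra factor of $2$ that a subfield-by-subfield count cannot see. To salvage your version you would have to establish one further divisibility under the hypothesis $\genfrac(){}{0}{2}{p}_4=\genfrac(){}{0}{p}{2}_4=1$ --- for instance $4\mid h_2(2p)$, or $4\mid q(L_p)$, or $16\mid h_2(-p)$ --- none of which you prove or cite.
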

 \begin{proof}
 	\begin{enumerate}[\rm 1.]
 		\item By  equality \eqref{eq 3} in the proof of Theorem \ref{thm (2,4)} we have:
 		\begin{eqnarray*}
 			h_2(L_p)=\frac{1}{4}
 			h_2( L_{p}^+)h_2(-p)h_2(-2p).
 		\end{eqnarray*}
 		Note that by \cite[p. 596]{kaplan1973divisibilitepar8}, $h_2(-2p)$ is divisible by $4$. Since for  a prime $p'\equiv 1\pmod8$,   $h_2(-p')=4$ if and only if  $\genfrac(){}{0}{2}{p'}_4\not= \genfrac(){}{0}{p'}{2}_4$ (see the proof of Theorem \ref{thm (2,4)}), then $h_2(-p)$ is divisible by $8$
 		(in fact $\genfrac(){}{0}{2}{p}_4= \genfrac(){}{0}{p}{2}_4$ and by  \cite[Corollaries 18.4 and 19.6]{connor88} $h_2(-p)$ is divisible by $4$). Thus,
 		$h_2(L_p)$ is divisible by $8\cdot h_2( L_{p}^+)$. By
 		\cite[Theorem 2]{kuvcera1995parity}, $h_2( L_{p}^+)$ is even. From which we infer that
 		 $h_2(L_p)$ is divisible by $16$. So we have the first item.
 		
 		\item  	By Proposition \ref{wada's f.} we have
 		\begin{eqnarray*}
 			h(L_{q_1q_2})=\frac{1}{2^5}q(L_{ q_1q_2})h(q_1q_2) h(-q_1q_2)h(2q_1q_2)h(-2q_1q_2)h(2)h(-2)h(-1)\cdot
 		\end{eqnarray*}
 		On one hand, by \cite[p. 345]{kaplan76}, $h_2(2q_1q_2)$ is divisible by $4$, by   \cite[pp. 354, 356]{kaplan76},  $h_2(-q_1q_2)$ and $h_2(-2q_1q_2)$ are both divisible by    $8$, and, by \cite[Corollary 18.4]{connor88}, $h_2( q_1q_2)=1$. On the other hand,
 		  by Lemma \ref{lm2 on unit undex}, $q(L_{q_1q_2})=4$.  From all these results, it follows by passing to the $2$-part in the above equality that $h_2(L_{d})$ is
 		divisible by $\frac{1}{2^5}\cdot 4\cdot  8\cdot 4\cdot 8 =32$. And the second item follows. 
 		\item We proceed as in  the proof the second item.	
 	\end{enumerate}
 \end{proof}

\begin{example}
Let $d=p$ be as in the first item of the above Lemma.	Using  PARI/GP calculator version 2.9.4 (64bit), Dec 20, 2017, for $3\leq p\leq 10^4$,  we could not find a field $L_d$ such that $h_2(L_{d})=16$. We have the following examples.
	\begin{enumerate}[\rm 1.]
		\item For $p=113$ , we have $\genfrac(){}{0}{2}{p}_4= \genfrac(){}{0}{p}{2}_4=1$ and $h_2(L_{113})=64$.
		\item For $p=337$ , we have  $\genfrac(){}{0}{2}{p}_4= \genfrac(){}{0}{p}{2}_4=1$ and $h_2(L_{337})=32$.
	    \item For $q_1=7$ and $q_2=31$, we have $h_2(L_{7.31})=32$.
		\item For $q_1=7$ and $q_2=23$, we have $h_2(L_{7.23})=64$.
	\end{enumerate}
\end{example}

 \begin{lm}\label{lm section 2 lm 3}
 	\begin{enumerate}[\rm 1.]
 		\item If  $d=q_1q_2p$  with   $ q_1 \equiv q_2\equiv3\pmod8$  and $p\equiv5\pmod{8}$, then $h_2(L_{ q_1q_2p})\equiv 0\pmod{32}$. So $\mathrm{Cl}_2(L_{ q_1q_2p})$ is not elementary.
 		\item 	If   $d=qp_1p_2$ with   $ q \equiv3\pmod8$  and $p_1\equiv p_2\equiv5\pmod{8}$,  then $h_2(L_{  qp_1p_2})\equiv 0\pmod{32}$. So $\mathrm{Cl}_2(L_{qp_1p_2})$ is not elementary.
 	\end{enumerate}
 \end{lm}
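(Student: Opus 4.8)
The plan is to run both items through the multiquadratic class number formula, exactly as in the proofs of Lemma \ref{lm h2 is divisible by 16  in 5mod8} and Lemma \ref{lm section 2 lm 2}. Write $L_d=\mathbb Q(i,\sqrt2,\sqrt d)$; it is an imaginary multiquadratic field of degree $2^3$ whose seven quadratic subfields are $\mathbb Q(i),\mathbb Q(\sqrt2),\mathbb Q(\sqrt{-2})$ together with $\mathbb Q(\sqrt{\pm d})$ and $\mathbb Q(\sqrt{\pm2d})$. Since $h_2(-1)=h_2(2)=h_2(-2)=1$ and $q(L_d)$ is a power of $2$ by \cite[Theorem 2.2]{azt2014structure}, taking $2$-parts in Proposition \ref{wada's f.} (with $v=5$) yields
$$h_2(L_d)=\frac{1}{2^5}\,q(L_d)\,h_2(d)\,h_2(-d)\,h_2(2d)\,h_2(-2d).$$
Hence it suffices to show that $q(L_d)\,h_2(d)\,h_2(-d)\,h_2(2d)\,h_2(-2d)$ is divisible by $2^{10}$ in each of the two cases.

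For the unit index I would argue as in the proof of Lemma \ref{lm h2 is divisible by 16  in 5mod8}: the primitive eighth root of unity $\zeta_8=\frac{1+i}{\sqrt2}$ lies in $E_{L_d}$, while the torsion subgroup of $\prod_i E_{k_i}$ is the group $\langle i\rangle$ of fourth roots of unity (no $k_i$ contains $\zeta_8$, and a product of fundamental units of the real quadratic subfields is never a nontrivial root of unity), so $\zeta_8\notin\prod_i E_{k_i}$ and $q(L_d)$ is even, i.e. $q(L_d)\ge 2$. For the class numbers I would invoke genus theory (equivalently the ambiguous class number formula, or the relevant entries of \cite{connor88} and \cite{kaplan76}): a quadratic field with $t$ ramified primes satisfies $2^{t-1}\mid h_2$ if it is imaginary and $2^{t-2}\mid h_2$ if it is real. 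In case $1$, $q_1q_2p\equiv5\pmod8$ gives $d\equiv1\pmod4$ and $-d\equiv3\pmod4$, so the ramified primes are $\{q_1,q_2,p\}$ for the real field $\mathbb Q(\sqrt d)$ ($t=3$) and $\{2,q_1,q_2,p\}$ for each of $\mathbb Q(\sqrt{-d})$, $\mathbb Q(\sqrt{2d})$, $\mathbb Q(\sqrt{-2d})$ (the first and third imaginary, the second real, all with $t=4$); this gives $h_2(d)h_2(-d)h_2(2d)h_2(-2d)\ge 2^1\cdot2^3\cdot2^2\cdot2^3=2^9$, whence $h_2(L_d)\ge \frac{1}{2^5}\cdot2\cdot2^9=32$. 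In case $2$, $qp_1p_2\equiv3\pmod8$ gives $d\equiv3\pmod4$ and $-d\equiv1\pmod4$, so the ramified primes are $\{2,q,p_1,p_2\}$ for the real field $\mathbb Q(\sqrt d)$ ($t=4$), $\{q,p_1,p_2\}$ for the imaginary field $\mathbb Q(\sqrt{-d})$ ($t=3$), $\{2,q,p_1,p_2\}$ for the real field $\mathbb Q(\sqrt{2d})$ ($t=4$) and $\{2,q,p_1,p_2\}$ for the imaginary field $\mathbb Q(\sqrt{-2d})$ ($t=4$); thus $h_2(d)h_2(-d)h_2(2d)h_2(-2d)\ge 2^2\cdot2^2\cdot2^2\cdot2^3=2^9$ and again $h_2(L_d)\ge 32$. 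In both cases $32\mid h_2(L_d)$, so $\mathrm{Cl}_2(L_d)$, which has rank $3$ by Lemma \ref{lm section 2 lm 1}, is not elementary abelian.

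The argument is routine; the only real care lies in the bookkeeping — reading off the correct discriminant (hence the set of ramified primes) from the residue of $d$ modulo $8$, and remembering that the real subfields $\mathbb Q(\sqrt d),\mathbb Q(\sqrt{2d})$ carry only the weaker bound $2^{t-2}$ (their fundamental units have norm $+1$, forced by the prime $q_1$, resp. $q$, which is $\equiv3\pmod4$ and divides the discriminant). Even with these weaker bounds the estimate lands exactly on $32$, so I expect no genuine obstacle beyond this case analysis; if one wants extra slack, the finer divisibility results of \cite{kaplan76} and \cite{connor88} for these congruence classes push the bound well past $32$.
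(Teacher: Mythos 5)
Your argument is correct, but it follows a genuinely different route from the paper's. The paper applies Kuroda's formula for the $V_4$-extension $L_d/\mathbb{Q}(i)$ (Proposition \ref{Kurod's f.}), which reduces everything to the biquadratic fields $K_2=\mathbb{Q}(\sqrt{d},i)$ and $K_3=\mathbb{Q}(\sqrt{2d},i)$: their $2$-class groups have ranks $2$ and $3$ by \cite{mccall1995imaginary}, and the classifications in \cite{azizi99(22} and \cite{azizitaous(24)(222)} show they are not of type $(2,2)$, resp.\ $(2,2,2)$, so $8\mid h(K_2)$, $16\mid h(K_3)$, and $32\mid h(L_d)$ follows with no lower bound needed on the unit index $Q(L_d/\mathbb{Q}(i))\ge 1$. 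You instead use the degree-$8$ formula of Proposition \ref{wada's f.} over $\mathbb{Q}$, which obliges you to prove $2\mid q(L_d)$ (your $\zeta_8$ argument, the same one used in the proof of Lemma \ref{lm h2 is divisible by 16  in 5mod8}, does this correctly), but in exchange replaces the three external classification results by plain genus theory on the quadratic subfields. Your bookkeeping checks out: in case 1, $d\equiv 5\pmod 8$ gives ramified-prime counts $3,4,4,4$ and hence lower bounds $2,8,4,8$ for $h_2(d),h_2(-d),h_2(2d),h_2(-2d)$; in case 2, $d\equiv 3\pmod 8$ gives counts $4,3,4,4$ and bounds $4,4,4,8$; in both cases the product is $2^9$, and since every factor (including $q(L_d)$) is a power of $2$, the inequality really is a divisibility, yielding $h_2(L_d)\equiv 0 \pmod{2^{10}/2^5}=\pmod{32}$. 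Your version is more self-contained and elementary; the paper's is computationally lighter but leans on cited structure theorems. Both are valid proofs of the lemma.
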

 \begin{proof}
 	\begin{enumerate}[\rm 1.]
 		\item 	Consider the following diagram (Figure \ref{ diagram Ld/Qi} below):
 		\begin{center}
 			\vspace*{-0.3 cm}
 			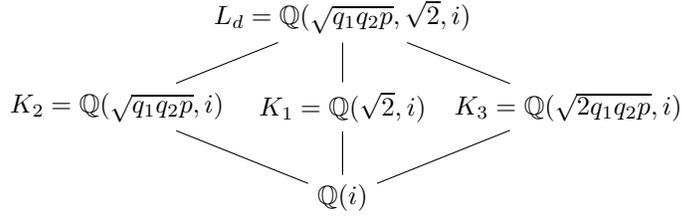
\begin{figure}[H]\label{ diagram Ld/Qi}
 				
 				\begin{minipage}{5cm}
 					{\footnotesize
 						\hspace*{-2.5cm}\begin{tikzpicture} [scale=1.2]
 						\node (Q)  at (0,0) {$\mathbb Q(i)$};
 						\node (d)  at (-2.5,1) {$K_2=\mathbb Q(\sqrt{q_1q_2p}, i)$};
 						\node (-d)  at (2.5,1) {$K_3=\mathbb Q(\sqrt{2 q_1q_2p}, i)$};
 						\node (zeta)  at (0, 1) {$K_1= \mathbb{Q}(\sqrt 2,  i)$};
 						\node (zeta d)  at (0, 2) {$L_d=\mathbb Q(\sqrt{q_1q_2p}, \sqrt 2, i)$};
 						\draw (Q) --(d)  node[scale=0.4, midway, below right]{};
 						\draw (Q) --(-d)  node[scale=0.4, midway, below right]{};
 						\draw (Q) --(zeta)  node[scale=0.4, midway, below right]{};
 						\draw (Q) --(zeta)  node[scale=0.4, midway, below right]{};
 						\draw (zeta) --(zeta d)  node[scale=0.4, midway, below right]{};
 						\draw (d) --(zeta d)  node[scale=0.4, midway, below right]{};
 						\draw (-d) --(zeta d)  node[scale=0.4, midway, below right]{};
 						\end{tikzpicture}}
 				\end{minipage}
 				\caption{$L_{q_1q_2p}/\mathbb Q(i)$.}
 			\end{figure}
 		\end{center}
 	By applying   Proposition \ref{Kurod's f.} to the $V_4$-extension $L_{q_1q_2p}/\mathbb Q(i)$ we get:
 	
 		\begin{eqnarray*}
 		h(L_d)&=&\frac{1}{4}Q(L_d/\mathbb{Q}(i))h(K_1)h(K_2)h(K_3)/h(\mathbb{Q}(i))^2.
 	\end{eqnarray*}
 It is known that $h(K_1)=h(\mathbb{Q}(i))=1$ (in fact,  $K_1=\mathbb{Q}(\zeta_8)$), so		
 		\begin{eqnarray}\label{eq pqq}
 			h(L_d)&=&\frac{1}{4}Q(L_d/\mathbb{Q}(i))h(K_2)h(K_3).
 		\end{eqnarray}
  Note that by \cite[Proposition 2]{mccall1995imaginary},  the ranks of the $2$-class groups of $K_2$ and $K_3$ equal  $2$ and $3$ respectively.
  The author of \cite{azizi99(22} determined all  fields $\mathbb{Q}(\sqrt{d},  i)$ for which the $2$-class group  is of type $(2, 2)$.
 By checking all the results of this last reference,  we deduce that $2$-class group  of $K_2$ is not of type $(2, 2)$. So $8$ divides the class number of $K_2$.

On the other hand,  by   \cite[Théorème 5.3]{azizitaous(24)(222)}   the $2$-class group of $K_3$ is not of type  $(2, 2, 2)$. So the class number of $K_2$ is divisible by $16$.
 Hence   by    equality \eqref{eq pqq} $h(L_d)$ is divisible by $\frac{1}{4}\cdot 8 \cdot 16=32$. So the first item.

 		\item 
 		We similarly prove the second item by using \cite[Théorème 5.3]{azizitaous(24)(222)} and \cite[Proposition 2]{mccall1995imaginary}.
 	\end{enumerate}
 \end{proof}

\begin{example}
Let $d=qp_1p_2$ be as in the second item of the above Lemma.	Using  PARI/GP calculator version 2.9.4 (64bit), Dec 20, 2017, for $3\leq q, p_i\leq 100$,  we could not find a field $L_d$ such that $h_2(L_{d})=32$. We have the following examples.
	\begin{enumerate}[\rm 1.]
		\item For $q_1=3$, $q_2=11$  and $p=5$, we have    $h_2(L_{3.11.5})=32$.
	\item For $q_1=3$, $q_2=11$  and $p=13$, we have    $h_2(L_{3.11.13})=128$.
		\item For  $q=3 $, $p_1=5 $ and  $p_2= 13$, we have    $h_2(L_{3.5.13})=64$.
		\item For  $q=3 $, $p_1=5 $ and  $p_2= 29$, we have    $h_2(L_{3.5.29})=128$.
	\end{enumerate}
\end{example}

 \begin{lm}\label{lm3 on unit undex}
	Let $d=qp$ with   $q \equiv3\pmod8$,  $p\equiv1\pmod 8$ and  $\genfrac(){}{0}{p}{q}=-1$. Then  $E_{L_d}=\langle \zeta_{8}, \varepsilon_{  2},   	\sqrt{ \varepsilon_{ pq}}, \sqrt{ \varepsilon_{   2pq}}\rangle$, and thus $q(L_d)=8$.
\end{lm}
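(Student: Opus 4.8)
The plan is to follow the proof of Lemma~\ref{lm on unit undex}: first determine a fundamental system of units of the real biquadratic field $L_d^+=\QQ(\sqrt2,\sqrt{pq})$, then lift it to $L_d$ via $\zeta_8$ and the Hasse index, as in \cite[Proposition~2]{azizi99unite}. Since the prime $q\equiv3\pmod4$ divides both $pq$ and $2pq$, the equation $X^2\equiv-1$ is unsolvable modulo $q$, so $N(\varepsilon_{pq})=N(\varepsilon_{2pq})=1$, whereas $N(\varepsilon_2)=-1$. As $pq\equiv3\pmod4$ and $2pq\equiv2\pmod4$, the maximal orders of $\QQ(\sqrt{pq})$ and $\QQ(\sqrt{2pq})$ are $\ZZ[\sqrt{pq}]$ and $\ZZ[\sqrt{2pq}]$, so we may write $\varepsilon_{pq}=a+b\sqrt{pq}$ and $\varepsilon_{2pq}=x+y\sqrt{2pq}$ with $a,b,x,y\in\ZZ$, $b,y>0$.

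The core step is to show that $\varepsilon_{pq}$ and $\varepsilon_{2pq}$ are squares in $L_d^+$. From $N(\varepsilon_{pq})=1$ we get $(a-1)(a+1)=pqb^2$; since the odd parts of $a-1$ and $a+1$ are coprime and $pq$ is squarefree, the odd part of one of $a\pm1$ is $m_1c_1^2$ and that of the other is $m_2c_2^2$ with $m_1m_2=pq$, i.e. $\{m_1,m_2\}=\{1,pq\}$ or $\{m_1,m_2\}=\{p,q\}$; moreover $\sqrt{\varepsilon_{pq}}=\frac1{\sqrt2}\bigl(\sqrt{a-1}+\sqrt{a+1}\bigr)$. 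If $\{m_1,m_2\}=\{1,pq\}$ then $\sqrt{a-1}$ and $\sqrt{a+1}$ both lie in $\QQ(\sqrt2,\sqrt{pq})=L_d^+$, hence $\sqrt{\varepsilon_{pq}}\in L_d^+$. Otherwise $p$ divides exactly one of $a\pm1$ and $q$ the other; reducing modulo $p$ the member of $\{a\pm1\}$ \emph{not} divisible by $p$ gives $\pm2\equiv q\cdot 2^{e}\cdot(\text{a square})\pmod p$, and since $p\equiv1\pmod8$ forces $\left(\frac2p\right)=\left(\frac{-1}{p}\right)=1$, we obtain $\left(\frac qp\right)=1$; quadratic reciprocity ($p\equiv1\pmod4$) then yields $\left(\frac pq\right)=1$, against the hypothesis. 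Hence $\sqrt{\varepsilon_{pq}}\in L_d^+$. Exactly the same analysis of $(x-1)(x+1)=2pqy^2$ shows that $p$ and $q$ must divide the same member of $\{x\pm1\}$, so $\sqrt{x-1}$ and $\sqrt{x+1}$ both lie in $L_d^+$ and $\sqrt{\varepsilon_{2pq}}=\frac1{\sqrt2}\bigl(\sqrt{x-1}+\sqrt{x+1}\bigr)\in L_d^+$.

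Now $M:=\langle-1,\varepsilon_2,\varepsilon_{pq},\varepsilon_{2pq}\rangle=\langle-1\rangle\prod_iE_{k_i}$ over the quadratic subfields of $L_d^+$, and we have just seen that $\varepsilon_{pq},\varepsilon_{2pq}$ (and hence $\varepsilon_{pq}\varepsilon_{2pq}$) are squares in $L_d^+$, while $\varepsilon_2$ is not: $\varepsilon_2$ is negative at two of the four real places of $L_d^+$ (namely where $\sqrt2\mapsto-\sqrt2$, giving $1-\sqrt2<0$), and likewise $-\varepsilon_2$ is not totally positive, so neither $\varepsilon_2$ nor $-\varepsilon_2$ is a square. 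By the algorithm of \cite{wada} (or \cite[p.~113]{azizi99unitedeg8}) these are all the square products of $\varepsilon_2,\varepsilon_{pq},\varepsilon_{2pq}$, so $[E_{L_d^+}:M]=4$ and $\{\varepsilon_2,\sqrt{\varepsilon_{pq}},\sqrt{\varepsilon_{2pq}}\}$ is a fundamental system of units of $L_d^+$. Finally \cite[Proposition~2]{azizi99unite} gives $E_{L_d}=\langle\zeta_8\rangle E_{L_d^+}=\langle\zeta_8,\varepsilon_2,\sqrt{\varepsilon_{pq}},\sqrt{\varepsilon_{2pq}}\rangle$; since the three imaginary quadratic subfields of $L_d$ other than $\QQ(i)$ have unit group $\{\pm1\}$, one has $\prod_iE_{k_i}=\langle i,\varepsilon_2,\varepsilon_{pq},\varepsilon_{2pq}\rangle$, whence $q(L_d)=[\langle\zeta_8\rangle:\langle i\rangle]\cdot[\langle\varepsilon_2,\sqrt{\varepsilon_{pq}},\sqrt{\varepsilon_{2pq}}\rangle:\langle\varepsilon_2,\varepsilon_{pq},\varepsilon_{2pq}\rangle]=2\cdot4=8$.

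I expect the main technical point to be a clean treatment of the two diophantine factorisations $(a-1)(a+1)=pqb^2$ and $(x-1)(x+1)=2pqy^2$ — in particular the $2$-adic bookkeeping needed to check that $\sqrt{a\pm1}$ and $\sqrt{x\pm1}$ really lie in $L_d^+$, and to carry out the exclusion of the ``mixed'' splitting $\{m_1,m_2\}=\{p,q\}$ uniformly in all the parity sub-cases. Note that Lemma~\ref{3:105} is not available here because $pq\equiv3\pmod4$; in fact $2\varepsilon_{pq}$ may well be a square in $\QQ(\sqrt{pq})$, and it is exactly this phenomenon that produces the value $q(L_d)=8$, as opposed to the value $q(L_d)=4$ obtained in Lemma~\ref{lm on unit undex}.
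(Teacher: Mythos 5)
Your proof is correct and follows exactly the route the paper intends: the paper's own proof of this lemma is just the one-line remark ``proceed as in the proof of Lemma~\ref{lm on unit undex}'', and you have carried out that program in full (norm computations, the factorisation of $(a-1)(a+1)$ and $(x-1)(x+1)$, exclusion of the mixed splitting via $\left(\frac{p}{q}\right)=-1$ and reciprocity, then Wada's algorithm and \cite[Proposition~2]{azizi99unite}). Your closing observation that $2\varepsilon_{pq}$ being a square in $\QQ(\sqrt{pq})$ is what pushes $q(L_d)$ from $4$ to $8$ is accurate and is precisely the point the paper leaves implicit.
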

\begin{proof}
	We proceed as in  the proof of Lemma \ref{lm on unit undex}.
\end{proof}

Now we are able to state the main theorem of this subsection.
 \begin{theorem}\label{thm 2 }
 	Let $d$ be a square free integer,  then $\mathrm{Cl}_2(L_d) \simeq(2, 2, 2)$ if and only if $d$ takes one of  the two following forms:
 	\begin{enumerate}[\rm 1.]
 		\item $d=p_1p_2$ with   $ p_1\equiv5\pmod8 $,    $ p_2\equiv1\pmod8$,  $\genfrac(){}{0}{2}{p_2}_4\neq \genfrac(){}{0}{p_2}{2}_4$ and
 		$\genfrac(){}{0}{p_2}{p_1}  =-1$.
 		\item  $d=qp$ with   $ q \equiv3\pmod8$,  $p\equiv1\pmod 8$,  $\genfrac(){}{0}{2}{p}_4 =-1$ and
 		$\genfrac(){}{0}{p}{q} =-1$.
 	\end{enumerate}
 \end{theorem}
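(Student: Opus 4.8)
The plan is to run the same scheme used for Theorem \ref{thm (2,4)}. A $2$-group of rank $3$ is elementary abelian exactly when it has order $8$, so $\mathrm{Cl}_2(L_d)\simeq(2,2,2)$ is equivalent to ``$\mathrm{rank}\,\mathrm{Cl}_2(L_d)=3$ and $h_2(L_d)=8$''. By Lemma \ref{lm section 2 lm 1} the rank is $3$ precisely for the six shapes of $d$ listed there, so it suffices to decide, for each of these six shapes, whether $h_2(L_d)=8$. Four of them are discarded at once: Lemma \ref{lm section 2 lm 2} gives $16\mid h_2(L_d)$ for the shape in its first item and $32\mid h_2(L_d)$ for the shape in its second item, while Lemma \ref{lm section 2 lm 3} gives $32\mid h_2(L_d)$ for the shapes in its two items (namely items~5 and~6 of Lemma \ref{lm section 2 lm 1}). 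Hence only $d=qp$ (item~3 of Lemma \ref{lm section 2 lm 1}) and $d=p_1p_2$ (item~4) can yield $\mathrm{Cl}_2(L_d)\simeq(2,2,2)$, and what remains is to isolate, inside each of these two families, the extra residue condition equivalent to $h_2(L_d)=8$.

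For $d=p_1p_2$ with $p_1\equiv5\pmod8$, $p_2\equiv1\pmod8$ and $\genfrac(){}{0}{2}{p_2}_4\neq\genfrac(){}{0}{p_2}{2}_4$, I would split on $\genfrac(){}{0}{p_2}{p_1}$. If $\genfrac(){}{0}{p_2}{p_1}=1$, then $32\mid h_2(L_d)$ by Lemma \ref{lm section 2 lm 2}(3), so this sub-case is excluded. If $\genfrac(){}{0}{p_2}{p_1}=-1$, apply Proposition \ref{wada's f.} to the imaginary multiquadratic field $L_d=\mathbb Q(\sqrt{p_1p_2},\sqrt2,i)$ of degree $2^3$, exactly as in the proof of Lemma \ref{lm h2 is divisible by 16  in 5mod8}; deleting the trivial factors $h_2(2)=h_2(-2)=h_2(-1)=1$ gives
$$h_2(L_d)=\frac{1}{2^5}\,q(L_d)\,h_2(p_1p_2)\,h_2(-p_1p_2)\,h_2(2p_1p_2)\,h_2(-2p_1p_2).$$
Into this one substitutes the unit index $q(L_d)$, computed as in Lemma \ref{lm on unit undex} from a fundamental system of units of $L_d^+$ (the point being that $\zeta_8\in E_{L_d}\setminus\prod_iE_{k_i}$ while, under the standing hypotheses, the relevant real units are not further squares in $L_d^+$), together with the values of $h_2(\pm p_1p_2)$ and $h_2(\pm 2p_1p_2)$ taken from Kaplan's tables \cite{kaplan76}: the hypothesis $\genfrac(){}{0}{2}{p_2}_4\neq\genfrac(){}{0}{p_2}{2}_4$ fixes three of these four $2$-class numbers, and $h_2(p_1p_2)=2$ is exactly the translation of $\genfrac(){}{0}{p_2}{p_1}=-1$. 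The resulting product is $8$, which gives the first item of the theorem.

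For $d=qp$ with $q\equiv3\pmod8$, $p\equiv1\pmod8$ and $\genfrac(){}{0}{2}{p}_4=-1$, the same Proposition \ref{wada's f.}, applied now to $L_d=\mathbb Q(\sqrt{pq},\sqrt2,i)$, yields
$$h_2(L_d)=\frac{1}{2^5}\,q(L_d)\,h_2(pq)\,h_2(-pq)\,h_2(2pq)\,h_2(-2pq).$$
When $\genfrac(){}{0}{p}{q}=-1$, Lemma \ref{lm3 on unit undex} gives $q(L_d)=8$, and the R\'edei--Reichardt evaluation of the $4$-rank of $\mathrm{Cl}_2(\mathbb Q(\sqrt{-pq}))$, combined with $\genfrac(){}{0}{2}{p}_4=-1$ and Kaplan's divisibility criteria, pins $h_2(pq)$, $h_2(-pq)$, $h_2(2pq)$, $h_2(-2pq)$ to the values whose product with $\frac{1}{2^5}\cdot8$ equals $8$; this is the second item of the theorem. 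For the complementary sub-case $\genfrac(){}{0}{p}{q}=1$ one must show $h_2(L_d)\neq8$: there the $4$-rank of $\mathrm{Cl}_2(\mathbb Q(\sqrt{-pq}))$ is $1$, so $h_2(-pq)\geq4$, and combining this with a lower bound on $q(L_d)$ — or, alternatively, re-running the $\mathbb Q(i)$-diagram argument of Lemma \ref{lm section 2 lm 3} with $K_1=\mathbb Q(\zeta_8)$, $K_2=\mathbb Q(\sqrt{pq},i)$, $K_3=\mathbb Q(\sqrt{2pq},i)$ and invoking \cite{mccall1995imaginary} and \cite{azizi99(22} — forces $16\mid h_2(L_d)$, excluding this sub-case.

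The real work, and the main obstacle, lies in the two inputs to the class number formula. First, the exact value of $q(L_d)$ in each family, which requires a full fundamental system of units of $L_d^+$ produced via the algorithm of \cite{wada} (see also \cite{azizi99unitedeg8}); this is the role of Lemmas \ref{lm on unit undex} and \ref{lm3 on unit undex}, and the delicate point is controlling the unit index in the discarded sub-case $\genfrac(){}{0}{p}{q}=1$. Second, pinning each quadratic $2$-class number $h_2(\pm pq)$, $h_2(\pm 2pq)$, $h_2(\pm p_1p_2)$, $h_2(\pm 2p_1p_2)$ to its precise value in terms of the rational biquadratic residue symbols, for which one must call on the fine divisibility results of Kaplan \cite{kaplan76}, Conner--Hurrelbrink \cite{connor88} and Leonard--Williams \cite{leonard1982divisibilityby16} rather than plain genus theory.
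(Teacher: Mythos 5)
Your overall skeleton --- reduce to the six rank-$3$ families of Lemma \ref{lm section 2 lm 1}, discard four of them via Lemmas \ref{lm section 2 lm 2} and \ref{lm section 2 lm 3}, then decide whether $h_2(L_d)=8$ inside the two survivors --- is exactly the paper's. Where you diverge is in how the two surviving families are handled, and that is where the proposal has a genuine gap. For $d=p_1p_2$ with $p_1\equiv5\pmod 8$ and $p_2\equiv1\pmod 8$ you propose to feed Proposition \ref{wada's f.} with a unit index $q(L_d)$ ``computed as in Lemma \ref{lm on unit undex}''. But that lemma's method hinges on $N(\varepsilon_{q_1q_2})=1$ and the factorizations $a\pm1=2q_ib_i^2$ supplied by Lemma \ref{3:105}; for $p_1p_2$ with $\genfrac(){}{0}{p_2}{p_1}=-1$ the fundamental unit has norm $-1$ and none of that machinery applies, so the value of $q(L_{p_1p_2})$ is established nowhere in your argument (nor anywhere in the paper, which avoids the issue entirely by quoting \cite[Theorem 4.2]{azt2018} together with Lemma \ref{lm section 2 lm 2}(3) for this family). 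Likewise your ``the resulting product is $8$'' step presupposes exact values of $h_2(\pm p_1p_2)$ and $h_2(\pm 2p_1p_2)$ that are asserted rather than derived.

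For $d=qp$ your route (Proposition \ref{wada's f.} with $q(L_d)=8$ from Lemma \ref{lm3 on unit undex}) is workable when $\genfrac(){}{0}{p}{q}=-1$, but in the complementary subcase you yourself concede that you need a lower bound on $q(L_d)$, which Lemma \ref{lm3 on unit undex} does not provide (its hypothesis is precisely $\genfrac(){}{0}{p}{q}=-1$). The paper circumvents this: it applies the CM class number formula (Proposition \ref{ class number f.}) to $L_d/\mathbb Q(\sqrt2)$ and Kuroda's formula to $K_1=\mathbb Q(\sqrt2,\sqrt{-d})$ to get $h_2(L_d)=\frac{Q_{L_d}}{4}\,h_2(L_d^+)\,h_2(-2d)\,h_2(-d)$, where $Q_{L_d}\ge1$ is a Hasse index rather than the full unit index; the facts that $h_2(L_d^+)=4$ iff $\genfrac(){}{0}{p}{q}=-1$ (from \cite{azmohib}) and the analogous statements for $h_2(-d)$ and $h_2(-2d)$ (from \cite{connor88} and \cite{kaplan76}) then both produce the value $8$ in the good subcase and force $16\mid h_2(L_d)$ in the bad one with no unit-index input at all. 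To make your version complete you would need either to adopt that decomposition or to supply the missing unit computations and the exact quadratic $2$-class numbers.
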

 \begin{proof}
 	 	It suffices to determine  which forms of $d$ appearing in Lemma \ref{lm section 2 lm 1},  we have $h_2(L_d)=8$. Let us start, as above,  by eliminating certain inconvenient cases.	
 	\begin{enumerate}[\rm $\bullet$]
 		\item The  forms of $d$ in the first and the second items of Lemma \ref{lm section 2 lm 1} are eliminated by Lemma \ref{lm section 2 lm 2}.
 		\item The   forms of $d$ in the  two last  items of Lemma \ref{lm section 2 lm 1} are eliminated by Lemma \ref{lm section 2 lm 3}.
 	\end{enumerate}
 	 It follows that it suffices  to check the two following cases:
 	\begin{enumerate}[\rm (I)]
 		\item $d=p_1p_2$ with   $ p_1\equiv5\pmod8 $,    $ p_2\equiv1\pmod8$ and $\genfrac(){}{0}{2}{p_2}_4\neq \genfrac(){}{0}{p_2}{2}_4$. \label{frm IS2}
 			\vspace{0.07cm}\item $d=qp$ with   $ q \equiv3\pmod8$  and $p\equiv1\pmod 8$ and $\genfrac(){}{0}{2}{p }_4=-1.$\label{frm IIS2}
 	\end{enumerate}	
 	$\bullet$ Suppose that  $d$ takes the form \eqref{frm IS2},  then  \cite[Theorem 4.2]{azt2018}  and the last assertion of Lemma \ref{lm section 2 lm 2}
 	give the first item.
 	
 	\noindent$\bullet$ Suppose now that  $d$ takes the form  \eqref{frm IIS2}.	Consider the following diagram (Figure \ref{dia2} below):
 	\begin{center}
 		\begin{figure}[H]
 			{\footnotesize
 				\begin{tikzpicture} [scale=1.1]
 				\node (Q)  at (0, 0) {$\mathbb Q(\sqrt{2})$};
 				\node (d)  at (-2.5, 1) {$L_{d}^+=\mathbb Q(\sqrt d, \sqrt 2)$};
 				\node (-d)  at (2.5, 1) {$K_1=\mathbb Q(\sqrt{2}, \sqrt{-d})$};
 				\node (zeta)  at (0, 1) {$K_2= \mathbb{Q}(\sqrt 2,  i)$};
 				\node (zeta d)  at (0, 2) {$L_d=\mathbb Q(\sqrt 2, \sqrt d, i)$};
 				\draw (Q) --(d)  node[scale=0.4, midway, below right]{};
 				\draw (Q) --(-d)  node[scale=0.4, midway, below right]{};
 				\draw (Q) --(zeta)  node[scale=0.4, midway, below right]{};
 				\draw (Q) --(zeta)  node[scale=0.4, midway, below right]{};
 				\draw (zeta) --(zeta d)  node[scale=0.4, midway, below right]{};
 				\draw (d) --(zeta d)  node[scale=0.4, midway, below right]{};
 				\draw (-d) --(zeta d)  node[scale=0.4, midway, below right]{};
 				\end{tikzpicture}}
 			\caption{$L_d/\mathbb Q(\sqrt 2)$.}\label{dia2}
 		\end{figure}
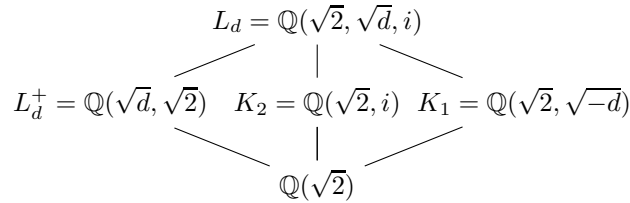
 	\end{center}
 	By    Proposition \ref{ class number f.}  we have:
 	\begin{eqnarray}\label{eq proof 222}
 	 h(L_d)=\frac{Q_{L_d}}{Q_{K_1}Q_{K_2}}\frac{\omega_{L_d}}{\omega_{K_1} \omega_{K_2}}
 	 \frac{h( L_{d}^+)h(K_1)h(K_2)}{h(\mathbb{Q}(\sqrt{2}))^2}\cdot
 	\end{eqnarray}
Note that    $h(K_2)=h(\mathbb{Q}(\sqrt{2}))=1$.
  We have,  $d\varepsilon_2$ is not a square in $\mathbb{Q}(\sqrt{2})$  since otherwise we will get, for some $\alpha$ in $\mathbb{Q}(\sqrt{2})$,  $d\varepsilon_2=\alpha^2$, then
  $N_{\mathbb{Q}(\sqrt{2})/\mathbb{Q}}(d\varepsilon_2)=-d^2=N_{\mathbb{Q}(\sqrt{2})/\mathbb{Q}}(\alpha)^2$. Which is false. It follows  by \cite[Proposition 3]{azizi99unite},  that  $\{\varepsilon_2\}$ is a fundamental system of units of $K_1$,  i.e.,  $ Q_{K_1}=1$.
 	Since  $ Q_{K_2}=1$ (cf. \cite[Lemma 2.5]{ACZ}),  $ \omega_{L_d}= \omega_{K_2}=8$ and $ \omega_{K_1}=2$,   then by passing to the $2$-part in the equality \eqref{eq proof 222},  we get
 	$h_2(L_d)= \frac{Q_{L_d}}{2} h_2( L_{d}^+)h_2(K_1).$ By Proposition \ref{Kurod's f.},  we have $	h_2(K_1)=\frac{1}{2}h_2(-2d)h_2(-d)h_2(2).$ So
 	\begin{eqnarray}\label{eq 6}
 	h_2(L_d)= \frac{Q_{L_d}}{4} h_2( L_{d}^+)h_2(-2d)h_2(-d).
 	\end{eqnarray}

 Note that, by  Lemma \ref{lm3 on unit undex}, if $\genfrac(){}{0}{p}{q } =-1$,  then $Q_{L_d}=1$. Note also that,
 \begin{enumerate}
\item 	from  \cite[Théorème 2]{azmohib} and its proof,  we deduce that $h_2(L_d^+)$ is divisible by $4$ and $h_2(L_d^+)=4$ if and only if $\genfrac(){}{0}{p}{q } =-1$,
\item from  \cite[Corollaries 18.5 and 19.6]{connor88},  we deduce that $h_2(-d)$ is even and $h_2(-d)=2$ if and only if $\genfrac(){}{0}{p}{q } =-1$,
\item from \cite[Corollaries 18.5 and 19.6]{connor88}  and \cite[p. 353]{kaplan76},  we deduce that $h_2(-2d)$ is divisible  by $4$ if and only if $\genfrac(){}{0}{p}{q } =-1$.
\end{enumerate}
Hence plugging all of these results into    equality \eqref{eq 6},  one gets the second item. Which completes the proof.
 \end{proof}

 \begin{example}
 	\begin{enumerate}[\rm 1.]For all the examples below, we used PARI/GP calculator version 2.9.4 (64bit), Dec 20, 2017.
 		\item Let $p_1=29$ and $p_2=17$. We have  $ p_2\equiv1\pmod8$,  $\genfrac(){}{0}{2}{p_2 }_4  \neq \genfrac(){}{0}{p_2}{2 }_4$,
 		$\genfrac(){}{0}{p_2}{p_1 }=-1$ and $\mathrm{Cl}_2(L_{29.17})  \simeq(2, 2, 2)$.
 		
 		\item Let $q=11$ and $p=17$. We have   $\genfrac(){}{0}{2}{p }_4=-1$,
 		$\genfrac(){}{0}{p}{q } =-1$ and $\mathrm{Cl}_2( L_{11.17})  \simeq(2, 2, 2)$.
 	\end{enumerate}
 \end{example}
\section*{Acknowledgment}
We would like to thank the unknown referee  for his/her several helpful suggestions that helped us to improve our paper, and for calling our attention
to the missing details. Many thanks are also due to Professor  Elliot Benjamin for his important remarks on this paper.
 \footnotesize
 
\end{document}